\newtheorem{proposition}{Proposition}
\begin{document}

\title{Optimal matching between curves in a manifold}

\author{Alice Le Brigant \and Marc Arnaudon \and Fr\'ed\'eric Barbaresco} 
\date{}

\maketitle              

\begin{abstract}
This paper is concerned with the computation of an optimal matching between two manifold-valued curves. Curves are seen as elements of an infinite-dimensional manifold and compared using a Riemannian metric that is invariant under the action of the reparameterization group. This group induces a quotient structure classically interpreted as the "shape space". We introduce a simple algorithm allowing to compute geodesics of the quotient shape space using a canonical decomposition of a path in the associated principal bundle. We consider the particular case of elastic metrics and show simulations for open curves in the plane, the hyperbolic plane and the sphere.
\end{abstract}

\section{Introduction}

A popular way to compare shapes of curves is through a Riemannian framework. The set of curves is seen as an infinite-dimensional manifold on which acts the group of reparameterizations, and is equipped with a Riemannian metric $G$ that is invariant with respect to the action of that group. Here we consider the set of open oriented curves in a Riemannian manifold $\left(M,\langle \cdot,\cdot \rangle\right)$ with velocity that never vanishes, i.e. smooth immersions,
\begin{equation*}
\mathcal{M}=\text{Imm}([0,1],M) = \{ c\in C^\infty([0,1],M) : c'(t) \neq 0 \,\, \forall t\in [0,1] \}.
\end{equation*}
It is an open submanifold of the Fr\'echet manifold $C^\infty([0,1],M)$ and its tangent space at a point $c$ is the set of infinitesimal vector fields along the curve $c$ in $M$,
\begin{equation*}
T_c\mathcal M = \{ w \in C^\infty([0,1],TM) : w(t) \in T_{c(t)}M \,\,\forall t\in [0,1] \}.
\end{equation*}
A curve $c$ can be reparametrized by right composition $c\circ \varphi$ with an increasing diffeomorphism $\varphi : [0,1] \rightarrow [0,1]$, the set of which is denoted by $\text{Diff}^+([0,1])$. We consider the quotient space $\mathcal S = \mathcal M/\text{Diff}^+([0,1],M)$, interpreted as the space of "shapes" or "unparameterized curves". 
If we restrict ourselves to elements of $\mathcal M$ on which the diffeomorphism group acts freely, then we obtain a principal bundle $\pi : \mathcal M \rightarrow \mathcal S$, the fibers of which are the sets of all the curves that are identical modulo reparameterization, i.e. that project on the same "shape" (Figure \ref{fibre}). We denote by $\bar c:=\pi(c)\in \mathcal S$ the shape of a curve $c\in \mathcal M$. Any tangent vector $w\in T_c\mathcal M$ can then be decomposed as the sum of a vertical part $w^{ver}\in \text{Ver}_c$, that has an action of reparameterizing the curve without changing its shape, and a horizontal part $w^{hor}\in\text{Hor}_c = \left(\text{Ver}_c \right)^{\perp_G}$, $G$-orthogonal to the fiber,
\begin{gather*}
T_c\mathcal M \ni w = w^{ver} + w^{hor} \in \text{Ver}_c \oplus \text{Hor}_c,\\
\text{Ver}_c = \ker T_c\pi = \left\{ m v := mc'/|c'| : \,\, m \in C^\infty([0,1], \mathbb R), m(0)=m(1)=0 \right\},\\
\text{Hor}_c = \left\{ h\in T_c\mathcal M : \,\, G_c(h,mv)=0, \,\, \forall m\in C^\infty([0,1],\mathbb R), m(0)=m(1)=0 \right\}.
\end{gather*}
If we equip $\mathcal M$ with a Riemannian metric $G_c : T_c\mathcal M \times T_c\mathcal M  \rightarrow \mathbb R$, $c\in \mathcal M$, that is constant along the fibers, i.e. such that
\begin{equation}
\label{equivariance}
G_{c\circ \varphi}(w\circ \varphi, z\circ \varphi) = G_c(w,z), \quad \forall \varphi \in \text{Diff}^+([0,1]),
\end{equation}
then there exists a Riemannian metric $\bar G$ on the shape space $\mathcal{S}$ such that $\pi$ is a Riemannian submersion from $(\mathcal M,G)$ to $(\mathcal S,\bar G)$, i.e.
\begin{equation*}
G_c(w^{hor},z^{hor}) = \bar G_{\pi( c)}\left( T_c\pi(w), T_c\pi(z) \right), \quad \forall w, z\in T_c\mathcal M.
\end{equation*}
This expression defines $\bar G$ in the sense that it does not depend on the choice of the representatives $c$, $w$ and $z$ (\cite{mich4}, \S 29.21). If a geodesic for $G$ has a horizontal initial speed, then its speed vector stays horizontal at all times - we say it is a horizontal geodesic - and projects on a geodesic of the shape space for $\bar G$ (\cite{mich4}, \S 26.12). The distance between two shapes for $\bar G$ is given by
\begin{equation*}
\bar d\left( \overline{c_0} , \overline{c_1} \right) = \inf \left\{\, d\left(c_0, c_1\circ \varphi \right) \, | \, \, \varphi \in \text{Diff}^+([0,1]) \, \right\}.
\end{equation*}
Solving the boundary value problem in the shape space can therefore be achieved either through the construction of horizontal geodesics e.g. by minimizing the horizontal path energy \cite{bauer12},\cite{tum}, or by incorporating the optimal reparameterization of one of the boundary curves as a parameter in the optimization problem \cite{bauer17},\cite{sri},\cite{zhang16}. Here we introduce a simple algorithm that computes the horizontal geodesic linking an initial curve with fixed parameterization $c_0$ to the closest reparameterization $c_1\circ\varphi$ of the target curve $c_1$. The optimal reparameterization $\varphi$ yields what we will call an \emph{optimal matching} between the curves $c_0$ and $c_1$.

\begin{figure}
\centering
\includegraphics[width=18em]{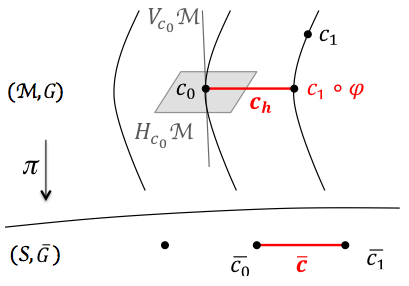}
\caption{Schematic representation of the shape bundle.}
\label{fibre}
\end{figure}

\section{The optimal matching algorithm}

We want to compute the geodesic path $s \mapsto \bar c(s)$ between the shapes of two curves $c_0$ and $c_1$, that is the projection $\bar c = \pi(c_h)$ of the horizontal geodesic $s \mapsto c_h(s)$ - if it exists - linking $c_0$ to the fiber of $c_1$ in $\mathcal M$, see Figure \ref{fibre}. This horizontal path verifies $c_h(0)=c_0$, $c_h(1) \in\pi^{-1}(\overline{c_1})$ and $\partial c_h/\partial s(s) \in \text{Hor}_{c_h(s)}$ for all $s\in[0,1]$. Its end point gives the optimal reparameterization $c_1\circ \varphi$ of the target curve $c_1$ with respect to the initial curve $c_0$, i.e. such that
\begin{equation*}
\bar d(\overline{c_0},\overline{c_1}) = d(c_0,c_1\circ \varphi) = d(c_0, c_h(1)).
\end{equation*}
In all that follows we identify a path of curves $[0,1]\ni s\mapsto c(s)\in \mathcal M$ with the function of two variables $[0,1]\times [0,1]\ni(s,t)\mapsto c(s,t)\in M$ and denote by $c_s:=\partial c/\partial s\,$ and $c_t:=\partial c/\partial t\,$ its partial derivatives with respect to $s$ and $t$. 
We decompose any path of curves $s\mapsto c(s)$ in $\mathcal M$ into a horizontal path reparameterized by a path of diffeomorphisms, i.e. $c(s) = c^{hor}(s) \circ \varphi(s)$ where $c^{hor}_s(s)\in \text{Hor}_{c^{hor}(s)}$ and $\varphi(s) \in \text{Diff}^+([0,1])$ for all $s\in[0,1]$. That is,
\begin{equation}
\label{def}
c(s,t) = c^{hor}(s,\varphi(s,t)) \quad \forall s,t\in[0,1].
\end{equation}
The horizontal and vertical parts of the speed vector of $c$ can be expressed in terms of this decomposition. Indeed, by taking the derivative of \eqref{def} with respect to $s$ and $t$ we obtain
\begin{subequations}
\begin{align}
c_s(s) &= c^{hor}_s(s) \circ \varphi(s) + \varphi_s(s) \cdot c^{hor}_t(s)\circ \varphi(s), \label{cs}\\
c_t(s) &= \varphi_t(s) \cdot c^{hor}_t(s) \circ \varphi(s), \label{ct}
\end{align}
\end{subequations}
and so if $v^{hor}(s,t) := c^{hor}_t(s,t)/|c^{hor}_t(s,t)|$ denotes the normalized speed vector of $c^{hor}$, \eqref{ct} gives since $\varphi_t >0$, $v(s) = v^{hor}(s) \circ \varphi(s)$. 
We can see that the first term on the right-hand side of Equation \eqref{cs} is horizontal. Indeed, for any $m : [0,1] \rightarrow C^\infty([0,1],\mathbb R)$ such that $m(s,0)=m(s,1)=0$ for all $s$, since $G$ is reparameterization invariant we have
\begin{align*}
G\left(c^{hor}_s(s) \circ \varphi(s), \,m(s) \cdot v(s) \right) &= G\left(c^{hor}_s(s)\circ \varphi(s), \,m(s) \cdot v^{hor}(s)\circ \varphi(s) \right)\\
&= G\left(c^{hor}_s(s), \,m(s) \circ \varphi(s)^{-1} \cdot v^{hor}(s) \right)\\
&= G\left(c^{hor}_s(s), \, \tilde m(s)\cdot v^{hor}(s)\right),
\end{align*}
with $\tilde m(s) = m(s) \circ \varphi(s)^{-1}$. Since $\tilde m(s,0) = \tilde m(s,1) = 0$ for all $s$, the vector $\tilde m(s)\cdot v^{hor}(s)$ is vertical and its scalar product with the horizontal vector $c^{hor}_s(s)$ vanishes. On the other hand, the second term on the right hand-side of Equation \eqref{cs} is vertical, since it can be written
\begin{equation*}
\varphi_s(s)\cdot c^{hor}_t\circ \varphi(s) = m(s)\cdot v(s),
\end{equation*}
with $m(s)=|c_t(s)|\varphi_s(s)/\varphi_t(s)$ verifying $m(s,0)=m(s,1)=0$ for all $s$. Finally, the vertical and horizontal parts of the speed vector $c_s(s)$ are given by
\begin{subequations}
\begin{align}
c_s(s)^{ver} &= m(s) \cdot v(s) = |c_t(s)|\varphi_s(s)/\varphi_t(s) \cdot v(s), \label{csver}\\
c_s(s)^{hor} &= c_s(s) - m(s)\cdot v(s) = c^{hor}_s(s)\circ \varphi(s).\label{cshor}
\end{align}
\end{subequations}
We call $c^{hor}$ the \emph{horizontal part} of the path $c$ with respect to $G$.
\begin{proposition}
The horizontal part of a path of curves $c$ is at most the same length as $c$  
\begin{equation*} L_G(c^{hor}) \leq L_G(c).
\end{equation*}
\end{proposition}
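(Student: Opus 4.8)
The plan is to reduce the statement to a pointwise inequality in the path parameter $s$ and then integrate. Writing the length functional as $L_G(c) = \int_0^1 \sqrt{G_{c(s)}\!\left(c_s(s),c_s(s)\right)}\,ds$ and likewise $L_G(c^{hor}) = \int_0^1 \sqrt{G_{c^{hor}(s)}\!\left(c^{hor}_s(s),c^{hor}_s(s)\right)}\,ds$, it suffices to show that for every fixed $s\in[0,1]$ one has $G_{c^{hor}(s)}\!\left(c^{hor}_s(s),c^{hor}_s(s)\right) \leq G_{c(s)}\!\left(c_s(s),c_s(s)\right)$.

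First I would invoke the $G$-orthogonal splitting $c_s(s) = c_s(s)^{hor} + c_s(s)^{ver}$ established in \eqref{csver}–\eqref{cshor}, with $c_s(s)^{hor}\in\mathrm{Hor}_{c(s)}$ and $c_s(s)^{ver}\in\mathrm{Ver}_{c(s)}$. Since these two components are $G_{c(s)}$-orthogonal by construction of the horizontal space, the Pythagorean identity gives $G_{c(s)}(c_s(s),c_s(s)) = G_{c(s)}\!\left(c_s(s)^{hor},c_s(s)^{hor}\right) + G_{c(s)}\!\left(c_s(s)^{ver},c_s(s)^{ver}\right) \geq G_{c(s)}\!\left(c_s(s)^{hor},c_s(s)^{hor}\right)$, the vertical term being nonnegative.

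Second I would rewrite the horizontal term using \eqref{cshor}, namely $c_s(s)^{hor} = c^{hor}_s(s)\circ\varphi(s)$, together with the reparameterization invariance \eqref{equivariance} applied to the diffeomorphism $\varphi(s)$, the base curve $c^{hor}(s)$ and the tangent vector $w = z = c^{hor}_s(s)$; note here that $c(s) = c^{hor}(s)\circ\varphi(s)$ is exactly the base point at which \eqref{equivariance} must be read, so that $G_{c(s)}\!\left(c_s(s)^{hor},c_s(s)^{hor}\right) = G_{c^{hor}(s)\circ\varphi(s)}\!\left(c^{hor}_s(s)\circ\varphi(s),\, c^{hor}_s(s)\circ\varphi(s)\right) = G_{c^{hor}(s)}\!\left(c^{hor}_s(s),c^{hor}_s(s)\right)$. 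Chaining the last two displays yields the desired pointwise bound, and integrating it over $s\in[0,1]$ gives $L_G(c^{hor}) \leq L_G(c)$.

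There is no genuine obstacle here: the argument is a one-line computation once the canonical decomposition and the equivariance \eqref{equivariance} are available. The only points deserving care are bookkeeping ones — applying \eqref{equivariance} at the correct base point $c^{hor}(s)\circ\varphi(s) = c(s)$, and observing that although $c^{hor}$ need not share the endpoints of $c$ (they differ by the reparameterizations $\varphi(0)$ and $\varphi(1)$), this is irrelevant for a statement about lengths. If one wishes, one can additionally record when equality holds, namely exactly when $c_s(s)^{ver}=0$ for almost every $s$, i.e. when $c$ is already, up to reparameterization, a horizontal path.
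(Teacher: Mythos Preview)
Your proof is correct and follows essentially the same route as the paper's: both establish the pointwise inequality $G_{c^{hor}(s)}(c^{hor}_s(s),c^{hor}_s(s))\leq G_{c(s)}(c_s(s),c_s(s))$ by combining the $G$-orthogonality of the horizontal/vertical splitting \eqref{csver}--\eqref{cshor} with the reparameterization invariance \eqref{equivariance}, and then integrate. Your write-up is simply a bit more explicit about the bookkeeping (base points, Pythagorean step) than the paper's two-line computation.
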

\begin{proof}
Since the metric $G$ is reparameterization invariant, the squared norm of the speed vector of the path $c$ at time $s\in[0,1]$ is given by, if $\|\cdot\|^2_G:=G(\cdot,\cdot)$,
\begin{align*}
\|c_s(s,\cdot)\|_G^2 &= \|c^{hor}_s(s,\varphi(s,\cdot))\|_G^2 + |\varphi_s(s,\cdot)|^2\|c^{hor}_t(s,\varphi(s,\cdot)\|_G^2\\
&=\|c^{hor}_s(s,\cdot)\|_G^2 + |\varphi_s(s,\cdot)|^2 \| c^{hor}_t(s,\cdot)\|_G^2,
\end{align*}
This gives $\|c^{hor}_s(s)\|_G\leq \|c_s(s)\|$ for all $s$ and so $L_G(c^{hor})\leq L_G(c)$.
\end{proof}
Now we will see how the horizontal part of a path of curves can be computed.
\begin{proposition}[Horizontal part of a path]\label{horpath}
Let $s\mapsto c(s)$ be a path in $\mathcal M$. Then its horizontal part is given by $c^{hor}(s,t) = c(s,\varphi(s)^{-1}(t))$, where the path of diffeomorphisms $s\mapsto \varphi(s)$ is solution of the PDE
\begin{equation}
\label{phicont}
\varphi_s(s,t) = m(s,t)/|c_t(s,t)|\cdot \varphi_t(s,t),
\end{equation}
with initial condition $\varphi(0,\cdot)=\text{Id}$, and where $m(s) : [0,1] \rightarrow \mathbb R$, $t \mapsto m(s,t):=|c_s^{ver}(s,t)|$ is the vertical component of $c_s(s)$.
\end{proposition}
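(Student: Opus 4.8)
\noindent\emph{Proof proposal.}

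The identity $c^{hor}(s,t)=c(s,\varphi(s)^{-1}(t))$ is merely the inversion of the defining relation \eqref{def}, so the whole content of the statement is the identification of the path of diffeomorphisms $\varphi$, and I would extract it from the \emph{vertical part} of $c_s$ computed in \eqref{csver}. Writing $m(s,t)$ for the coefficient of $c_s^{ver}(s,t)$ along the (unit) normalized velocity field $v(s,t)=c_t(s,t)/|c_t(s,t)|$, i.e.\ $c_s^{ver}=m\cdot v$ (so $|m|=|c_s^{ver}|$), equation \eqref{csver} reads $m(s,t)=|c_t(s,t)|\,\varphi_s(s,t)/\varphi_t(s,t)$; since $\varphi_t>0$, rearranging gives exactly \eqref{phicont}, while the initial condition $\varphi(0,\cdot)=\text{Id}$ is the normalization built into \eqref{def} (the decomposition is anchored at $s=0$ by $c^{hor}(0)=c(0)$). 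Granting that a decomposition of the form \eqref{def} exists, this already proves the proposition.

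To make the statement self-contained — existence, uniqueness and horizontality — I would run the argument backwards. Let $\varphi$ solve the first-order PDE \eqref{phicont} with $\varphi(0,\cdot)=\text{Id}$; here $m$ and $c_t$ are determined by $c$ alone, so \eqref{phicont} is a linear transport equation for $\varphi$, solvable along characteristics. Set $\psi(s):=\varphi(s)^{-1}$ and $c^{hor}(s,t):=c(s,\psi(s,t))$. Differentiating $\varphi(s,\psi(s,t))=t$ in $s$ gives $\psi_s(s,t)=-\varphi_s(s,\psi(s,t))/\varphi_t(s,\psi(s,t))$, which by \eqref{phicont} equals $-m(s,\psi(s,t))/|c_t(s,\psi(s,t))|$; substituting into $c^{hor}_s(s,t)=c_s(s,\psi(s,t))+\psi_s(s,t)\,c_t(s,\psi(s,t))$ and using $c_t/|c_t|=v$ and $c_s^{ver}=m\cdot v$ yields $c^{hor}_s(s)=c_s^{hor}(s)\circ\varphi(s)^{-1}$, the horizontal part of $c_s(s)$ transported by $\varphi(s)^{-1}$.

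It then remains to check that this vector is horizontal \emph{along} $c^{hor}(s)$, not just along $c(s)$. For $n$ with $n(s,0)=n(s,1)=0$, the reparameterization invariance \eqref{equivariance}, together with $c^{hor}(s)\circ\varphi(s)=c(s)$ and $v^{hor}(s)\circ\varphi(s)=v(s)$, gives $G_{c^{hor}(s)}\bigl(c_s^{hor}(s)\circ\varphi(s)^{-1},\,n(s)\cdot v^{hor}(s)\bigr)=G_{c(s)}\bigl(c_s^{hor}(s),\,(n(s)\circ\varphi(s))\cdot v(s)\bigr)$, and the right-hand side vanishes because $(n(s)\circ\varphi(s))\cdot v(s)$ is vertical along $c(s)$ while $c_s^{hor}(s)$ is horizontal there. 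Hence $c^{hor}_s(s)\in\text{Hor}_{c^{hor}(s)}$; combined with $c^{hor}(0)=c(0)$ and $c(s)=c^{hor}(s)\circ\varphi(s)$ this exhibits $c^{hor}$ as a horizontal part of $c$, and uniqueness follows since \eqref{phicont} determines $\varphi$ uniquely.

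The delicate point is not any single calculation but the bookkeeping attached to the group action: correctly differentiating compositions with $\varphi(s)$ and $\varphi(s)^{-1}$, and keeping track of which objects sit over $c(s)$ and which over $c^{hor}(s)$ ($v$ versus $v^{hor}$, horizontality over one curve versus the other). A secondary issue worth recording is the well-posedness of \eqref{phicont}: one should check that its solution stays a path of increasing diffeomorphisms fixing $0$ and $1$. The endpoint conditions are automatic, since vertical fields vanish at the endpoints, whence $m(s,0)=m(s,1)=0$, so $\varphi_s(s,0)=\varphi_s(s,1)=0$ and $\varphi(s,0)\equiv 0$, $\varphi(s,1)\equiv 1$; preservation of $\varphi_t>0$ follows from the transport structure (differentiating \eqref{phicont} in $t$ gives a linear equation for $\varphi_t$, which therefore keeps its sign).
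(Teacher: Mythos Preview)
Your first paragraph is exactly the paper's proof: the paper simply observes that \eqref{csver} gives $m=|c_t|\,\varphi_s/\varphi_t$ and rearranges to obtain \eqref{phicont}. Everything from your second paragraph onward --- the backward construction of $\varphi$ via the transport equation, the explicit check that $c^{hor}_s$ is horizontal over $c^{hor}(s)$ rather than merely over $c(s)$, and the well-posedness remarks on $\varphi$ remaining an increasing diffeomorphism fixing the endpoints --- is correct and goes well beyond what the paper supplies; the paper leans on the preceding discussion (equations \eqref{cs}--\eqref{cshor}) as having already established the decomposition \eqref{def}, and so treats the proposition as a mere rewriting of \eqref{csver} rather than as an independent existence result.
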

\begin{proof}
This is a direct consequence of Equation \eqref{csver}, which states that the vertical part of $c_s(s)$ is $m(s)\cdot v(s)$ where $m(s) = |c_t(s)|\varphi_s(s)/\varphi_t(s)$.
\end{proof}
If we take the horizontal part of the geodesic linking two curves $c_0$ and $c_1$, we will obtain a horizontal path linking $c_0$ to the fiber of $c_1$ which will no longer be a geodesic path. However this path reduces the distance between $c_0$ and the fiber of $c_1$, and gives a "better" representative $\tilde c_1 = c_1\circ \varphi(1)$ of the target curve. By computing the geodesic between $c_0$ and this new representative $\tilde c_1$, we are guaranteed to reduce once more the distance to the fiber. The algorithm that we propose simply iterates these two steps and is detailed in Algorithm \ref{alg}.

\begin{algorithm}[h]
\KwData{$c_0,c_1\in \mathcal M$}
\KwResult{$\tilde c_1$}
Set $\tilde c_1 \leftarrow c_1$ and $\text{Gap}\leftarrow 2\times\text{Threshold}$\;
\While{$\text{Gap}>\text{Threshold}$}{
construct the geodesic $s\mapsto c(s)$ between $c_0$ and $\tilde c_1$\; 
compute the horizontal part $s\mapsto c^{hor}(s)$ of $c$\;
set $\text{Gap}\leftarrow \text{dist}_{L^2}\big(c^{hor}(1),\tilde c_1\big)$ and $\tilde c_1 \leftarrow c^{hor}(1)$\; }
\caption{Optimal matching.}
\label{alg}
\end{algorithm}

\section{Example : elastic metrics}

In this section we consider the particular case of the two-parameter family of elastic metrics, introduced for plane curves by Mio et al. in \cite{mio}. 
We denote by $\nabla$ the Levi-Civita connection of the Riemannian manifold $M$, and by $\nabla_tw := \nabla_{c_t}w$, $\nabla^2_tw := \nabla_{c_t}\nabla_{c_t}w$ the first and second order covariant derivatives of a vector field $w$ along a curve $c$ of parameter $t$. For manifold-valued curves, elastic metrics can be defined for any $c\in T_c\mathcal M$ and $w,z\in T_c\mathcal M$ by
\begin{equation}
\label{metric}
G^{a,b}_c(w,z) = \langle w(0), z(0) \rangle + \int_0^1 \left(a^2\langle \nabla_\ell w^N, \nabla_\ell z^N \rangle + b^2 \langle \nabla_\ell w^T, \nabla_\ell z^T\rangle \right) \mathrm d\ell,
\end{equation}
where $\mathrm d\ell = |c'(t)|\mathrm dt$ and $\nabla_\ell =\frac{1}{|c'(t)|}\nabla_t$ respectively denote integration and covariant derivation according to arc length. In the following section, we will show simulations for the special case $a=1$ and $b=1/2$ : for this choice of coefficients, the geodesic equations are easily numerically solved \cite{moi17} if we adopt the so-called square root velocity representation \cite{sri}, in which each curve is represented by the pair formed by its starting point and speed vector renormalized by the square root of its norm. Let us characterize the horizontal subspace for $G^{a,b}$, and give the decomposition of a tangent vector.
\begin{proposition}[Horizontal part of a vector for an elastic metric]\label{horvect}
Let $c\in \mathcal M$ be a smooth immersion. A tangent vector $h \in T_c\mathcal M$ is horizontal for the elastic metric \eqref{metric} if and only if it verifies the ordinary differential equation
\begin{equation}
\label{odehor}
\left((a/b)^2-1\right)\langle \nabla_th, \nabla_tv\rangle - \langle \nabla_t^2h, v \rangle + |c'|^{-1} \langle \nabla_tc', v \rangle\langle \nabla_th, v\rangle=0.
\end{equation}
The vertical and horizontal parts of a tangent vector $w\in T_c\mathcal M$ are given by
\begin{equation*}
w^{ver} = mv, \quad w^{hor} = w - mv,
\end{equation*}
where the real function $m\in C^\infty([0,1],\mathbb R)$ verifies $m(0)=m(1)=0$ and
\begin{equation}
\label{mtt}
\begin{aligned}
&m'' - \langle \nabla_tc'/|c'|, v \rangle m' - (a/b)^2 |\nabla_tv|^2 m \\
&\hspace{3em}= \langle \nabla_t\nabla_tw, v \rangle - \left((a/b)^2-1\right)\langle \nabla_tw, \nabla_tv\rangle -\langle \nabla_tc'/|c'|, v \rangle\langle \nabla_tw, v\rangle.
\end{aligned}
\end{equation}
\end{proposition}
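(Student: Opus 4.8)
The plan is to argue directly from the definition of $\mathrm{Hor}_c$: a vector $h\in T_c\mathcal M$ is horizontal exactly when $G^{a,b}_c(h,mv)=0$ for every $m\in C^\infty([0,1],\mathbb R)$ with $m(0)=m(1)=0$, i.e. for every element $mv$ of $\mathrm{Ver}_c$ as described in the introduction. Once the resulting pointwise condition is in hand, the decomposition of a general $w$ is obtained by writing $w^{ver}=mv$ (the generic form of a vertical vector, which forces $m(0)=m(1)=0$) and demanding that $w-mv$ satisfy that condition.

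First I would compute $G^{a,b}_c(h,mv)$. The boundary term $\langle h(0),m(0)v(0)\rangle$ vanishes since $m(0)=0$. Since $|v|\equiv1$ we have $\nabla_\ell v\perp v$, so $\nabla_\ell(mv)=|c'|^{-1}m'\,v+m\,\nabla_\ell v$ decomposes into a component along $v$, namely $|c'|^{-1}m'v$, and a component orthogonal to $v$, namely $m\,\nabla_\ell v$. Substituting into \eqref{metric}, using $\nabla_\ell=|c'|^{-1}\nabla_t$ and $\mathrm d\ell=|c'|\,\mathrm dt$, yields
\begin{equation*}
G^{a,b}_c(h,mv)=\int_0^1|c'|^{-1}\Bigl(a^2\,m\,\langle\nabla_th,\nabla_tv\rangle+b^2\,m'\,\langle\nabla_th,v\rangle\Bigr)\,\mathrm dt .
\end{equation*}
Integrating the $m'$-term by parts — the boundary contribution again vanishing because $m(0)=m(1)=0$ — leaves $G^{a,b}_c(h,mv)=b^2\int_0^1 m\,P_h\,\mathrm dt$ with $P_h=(a/b)^2|c'|^{-1}\langle\nabla_th,\nabla_tv\rangle-\bigl(|c'|^{-1}\langle\nabla_th,v\rangle\bigr)'$ continuous. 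By the fundamental lemma of the calculus of variations this is zero for all admissible $m$ iff $P_h\equiv0$; multiplying by $|c'|$ and expanding $|c'|\bigl(|c'|^{-1}\langle\nabla_th,v\rangle\bigr)'=\langle\nabla_t^2h,v\rangle+\langle\nabla_th,\nabla_tv\rangle-|c'|^{-1}\langle\nabla_tc',v\rangle\langle\nabla_th,v\rangle$ (Leibniz rule together with $|c'|'=\langle\nabla_tc',v\rangle$) gives exactly \eqref{odehor}.

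For the decomposition I set $w^{ver}=mv$ and impose $w^{hor}:=w-mv\in\mathrm{Hor}_c$, i.e. that $h=w-mv$ solve \eqref{odehor}. Substituting and using $\nabla_t(mv)=m'v+m\nabla_tv$ and $\nabla_t^2(mv)=m''v+2m'\nabla_tv+m\nabla_t^2v$ together with $\langle v,\nabla_tv\rangle=0$ and $\langle\nabla_t^2v,v\rangle=-|\nabla_tv|^2$, each inner product in \eqref{odehor} becomes a $w$-contribution plus terms in $m,m',m''$; collecting these turns \eqref{odehor} into the linear second-order ODE \eqref{mtt}. I would then note that \eqref{mtt} with the Dirichlet conditions $m(0)=m(1)=0$ is a well-posed boundary value problem: its zeroth-order coefficient $-(a/b)^2|\nabla_tv|^2$ is nonpositive, so the homogeneous problem has only the trivial solution and $m$ exists and is unique. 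Uniqueness of the decomposition itself also follows from positive-definiteness of $G^{a,b}$, which forces $\mathrm{Ver}_c\cap\mathrm{Hor}_c=\{0\}$.

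The computations are routine; the points that need care are keeping the $|c'|$ factors straight through the integration by parts and correctly expanding $|c'|\bigl(|c'|^{-1}\langle\nabla_th,v\rangle\bigr)'$, since this is what makes \eqref{odehor} appear in precisely the stated normalization, and then the analogous bookkeeping in the substitution $h=w-mv$ leading to \eqref{mtt}. The only genuinely non-computational step is the solvability of the boundary value problem for $m$, which is handled via the sign of its zeroth-order coefficient as above.
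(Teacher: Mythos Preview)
Your proof is correct and follows essentially the same route as the paper: compute $G^{a,b}_c(h,mv)$ by splitting $\nabla_t(mv)$ into its tangential and normal components, integrate the $m'$-term by parts, and conclude via arbitrariness of $m$; then substitute $h=w-mv$ using the same identities $\langle\nabla_tv,v\rangle=0$, $\langle\nabla_t^2v,v\rangle=-|\nabla_tv|^2$ to obtain \eqref{mtt}. Your write-up is in fact slightly more complete than the paper's, since you explicitly invoke the fundamental lemma of the calculus of variations and add the well-posedness argument for the Dirichlet problem \eqref{mtt}, neither of which the paper spells out.
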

\begin{proof}
Let $h \in T_c\mathcal M$ be a tangent vector. It is horizontal if and only if it is orthogonal to any vertical vector, that is any vector of the form $mv$ with $m\in C^\infty([0,1],\mathbb R)$ such that $m(0)=m(1)=0$. We have $\nabla_t(mv) = m'v+m\nabla_tv \,$ and since $\langle \nabla_tv, v\rangle =0$ we get $\nabla_t(mv)^N = m \nabla_tv$ and $\nabla_t(mv)^T = m'v$. Since $m(0)=0$ the non integral part vanishes and the scalar product is written
\begin{align*}
&G^{a,b}_c(h,mv) = \int_0^1 \left( a^2 m \langle \nabla_th, \nabla_tv \rangle + b^2 m'\langle\nabla_th, v \rangle \right) |c'|^{-1}\mathrm dt\\
&=\int_0^1 a^2 m \langle \nabla_th, \nabla_tv \rangle |c'|^{-1} \mathrm dt - \int_0^1 b^2 m \frac{d}{dt}\big(\langle \nabla_th, v\rangle |c'|^{-1}\big) \mathrm dt\\
&=\int_0^1 m/|c'|\Big( (a^2-b^2)\langle \nabla_th, \nabla_tv\rangle - b^2 \langle \nabla_t\nabla_th, v \rangle+ b^2\langle \nabla_tc', v \rangle\langle \nabla_th, v\rangle|c'|^{-1} \Big) \mathrm dt,
\end{align*}
where we used integration by parts. The vector $h$ is horizontal if and only if $G^{a,b}_c(h,mv)=0$ for all such $m$, and so we obtain the desired equation. Now consider a tangent vector $w$ and a real function $m : [0,1] \rightarrow \mathbb R$ such that $m(0)=m(1)=0$. Then $w-mv$ is horizontal if and only if it verifies the ODE \eqref{odehor}. Noticing that
$\langle \nabla_tv,v\rangle=0$, $\langle \nabla_t\nabla_tv,v\rangle=-|\nabla_tv|^2$ and $\nabla_t\nabla_t(mv)= m'' v+ 2m'\nabla_tv +m\nabla_t\nabla_tv$, we easily get the desired equation.
\end{proof}
This allows us to characterize the horizontal part of a path of curves for $G^{a,b}$.
\begin{proposition}[Horizontal part of a path for an elastic metric]\label{horpathel}
Let $s\mapsto c(s)$ be a path in $\mathcal M$. Then its horizontal part is given by $c^{hor}(s,t) = c(s,\varphi(s)^{-1}(t))$, where the path of diffeomorphisms $s\mapsto \varphi(s)$ is solution of the PDE
\begin{equation}
\label{phicont}
\varphi_s(s,t) = m(s,t)/|c_t(s,t)|\cdot \varphi_t(s,t),
\end{equation}
with initial condition $\varphi(0,\cdot)=\text{Id}$, and where $m(s) : [0,1] \rightarrow \mathbb R$, $t \mapsto m(s,t)$ is solution for all $s$ of the ODE
\begin{equation}\label{odem}
\begin{aligned}
&m_{tt} - \langle \nabla_tc_t/|c_t|, v \rangle m_t - (a/b)^2 |\nabla_tv|^2 m\\
&\hspace{2em}= \langle \nabla_t\nabla_tc_s, v \rangle - \left((a/b)^2-1\right)\langle \nabla_tc_s, \nabla_tv\rangle -\langle \nabla_tc_t/|c_t|, v \rangle\langle \nabla_tc_s, v\rangle.
\end{aligned}
\end{equation}
\end{proposition}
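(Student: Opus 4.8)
The plan is to obtain Proposition~\ref{horpathel} as a specialization of the general description of the horizontal part of a path (Proposition~\ref{horpath}) to the elastic metric $G^{a,b}$, by inserting into it the pointwise vertical/horizontal decomposition of a tangent vector provided by Proposition~\ref{horvect}. In other words, Proposition~\ref{horpath} already gives the transport equation~\eqref{phicont} and the formula $c^{hor}(s,t)=c(s,\varphi(s)^{-1}(t))$ in full generality; all that remains is to make the function $m$ explicit for the metric $G^{a,b}$, and this is exactly what Proposition~\ref{horvect} does.

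First I would recall that, by Proposition~\ref{horpath}, for each fixed $s$ the function $t\mapsto m(s,t)$ is the coefficient of the vertical component of the tangent vector $c_s(s)\in T_{c(s)}\mathcal M$ with respect to the unit tangent $v(s)=c_t(s)/|c_t(s)|$, i.e. $c_s(s)^{ver}=m(s)\,v(s)$. Then I would apply Proposition~\ref{horvect} at the immersion $c(s)\in\mathcal M$ to the tangent vector $w=c_s(s)$: this yields $c_s(s)^{ver}=m(s)\,v(s)$ with $m(s)\in C^\infty([0,1],\mathbb R)$, $m(s,0)=m(s,1)=0$, and $m(s,\cdot)$ a solution of the second-order linear ODE~\eqref{mtt}. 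Here the translation of notation has to be done carefully: the prime and the operator $\nabla_t$ in the single-curve statement are derivations with respect to the \emph{curve parameter}, so when we specialize to the curve $c(s)$ they become $c_t(s)$ and $\nabla_{c_t(s)}$, the vector $w$ becomes $c_s(s)$, and $m''$ becomes $m_{tt}(s,\cdot)$; with these substitutions~\eqref{mtt} becomes precisely~\eqref{odem}. Combining this identification of $m$ with the transport equation~\eqref{phicont} of Proposition~\ref{horpath} gives the statement.

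The calculations involved are routine; the points that genuinely require care, and which I would expect to be the only mild obstacle, are the following. One must check that the function $m$ appearing in Proposition~\ref{horpath} really is the same object as the $m$ of Proposition~\ref{horvect}: this holds because both are, by definition, the coefficient of the $G^{a,b}$-orthogonal (vertical) projection of $c_s(s)$ onto $\text{Ver}_{c(s)}$. One must also verify that the Dirichlet boundary conditions $m(s,0)=m(s,1)=0$ hold for every $s$; this is automatic, since $c_s(s)^{ver}\in\text{Ver}_{c(s)}$ and, by the very description of $\text{Ver}_c$, vertical vectors are exactly those of the form $m v$ with $m$ vanishing at $t=0,1$. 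Finally, one should observe that the existence of such a solution $m(s)$ of~\eqref{odem} (and its smooth dependence on $s$) need not be re-proved here: it is granted by Proposition~\ref{horvect} applied pointwise in $s$ to the smooth path $s\mapsto c_s(s)$, so the present proposition is a direct corollary.
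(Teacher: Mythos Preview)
Your proposal is correct and follows exactly the paper's own argument: the proof in the paper is the single line ``This is a direct consequence of Propositions~\ref{horpath} and~\ref{horvect},'' and your write-up simply spells out that deduction with the appropriate translation of notation.
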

\begin{proof}
This is a direct consequence of Propositions \ref{horpath} and \ref{horvect}.
\end{proof}
We numerically solve the PDE of Proposition \ref{horpathel} using Algorithm \ref{alg:horpath}.
\begin{algorithm}
\KwData{path of curves $s\mapsto c(s)$}
\KwResult{path of diffeomorphisms $s\mapsto \varphi(s)$}
\For{$k=1$ \KwTo $n$}{
estimate the derivative $\varphi_t(\frac{k}{n},\cdot)$\;
solve ODE \eqref{odem} using a finite difference method to obtain $m(\frac{k}{n},\cdot)$\;
set $\varphi_s(\frac{k}{n},t) \leftarrow m(\frac{k}{n},t)/|c_t(\frac{k}{n},t)|\cdot \varphi_t(\frac{k}{n},t)$ for all $t$\;
propagate $\varphi(\frac{k+1}{n},t) \leftarrow \varphi(\frac{k}{n},t) + \frac{1}{n} \varphi_s(\frac{k}{n},t)$ for all $t$\;
}
\caption{Decomposition of a path of curves.}
\label{alg:horpath}
\end{algorithm}

\section{Simulations}	

We test the optimal matching algorithm for the elastic metric with parameters $a=2b=1$ - for which all the formulas and tools to compute geodesics are available \cite{moi17} - and for curves in the plane, the hyperbolic half-plane $\mathbb H^2$ and the sphere $\mathbb S^2$. The curves are discretized and geodesics are computed using a discrete geodesic shooting method presented in detail in \cite{moi17}. Useful formulas and algorithms in $\mathbb H^2$ and $\mathbb S^2$ are available in \cite{moi17} and \cite{zhang16} respectively. Figure \ref{fig:horgeodshootH2} shows results of the optimal matching algorithm for a pair of segments in $\mathbb H^2$. We consider 5 different combinations of parameterizations of the two curves, always fixing the parameterization of the curve on the left-hand side while searching for the optimal reparameterization of the curve on the right-hand side. On the top row, the points are "evenly distributed" along the latter, and on the bottom row, along the former. For each set of parameterizations, the geodesic between the initial parameterized curves (more precisely, the trajectories taken by each point) is shown in blue, and the horizontal geodesic obtained as output of the optimal matching algorithm is shown in red. The two images on the bottom right corner show their superpositions, and their lengths are displayed in Table \ref{fig:table}, in the same order as the corresponding images of Figure \ref{fig:horgeodshootH2}. We can see that the horizontal geodesics redistribute the points along the right-hand side curve in a way that seems natural : similarly to the distribution of the points on the left curve. Their superposition shows that the underlying shapes of the horizontal geodesics are very similar, which is not the case of the initial geodesics. 
The horizontal geodesics are always shorter than the initial geodesics, as expected, and have always approximatively the same length. This common length is the distance between the shapes of the two curves. The same exercise can be carried out on spherical curves (Figure \ref{fig:horgeodshootS2}) and on plane curves, for which we show the superposition of the geodesics and horizontal geodesics between different parameterizations in Figure \ref{fig:superpositionR2}. The execution time varies from a few seconds to a few minutes, depending on the curves and the ambient space : the geodesics between plane curves are computed using explicit equations whereas for curves in a nonlinear manifold, we use a time-consuming geodesic shooting algorithm.

\begin{figure}
\centering
\subfloat{\includegraphics[width=0.18\textwidth]{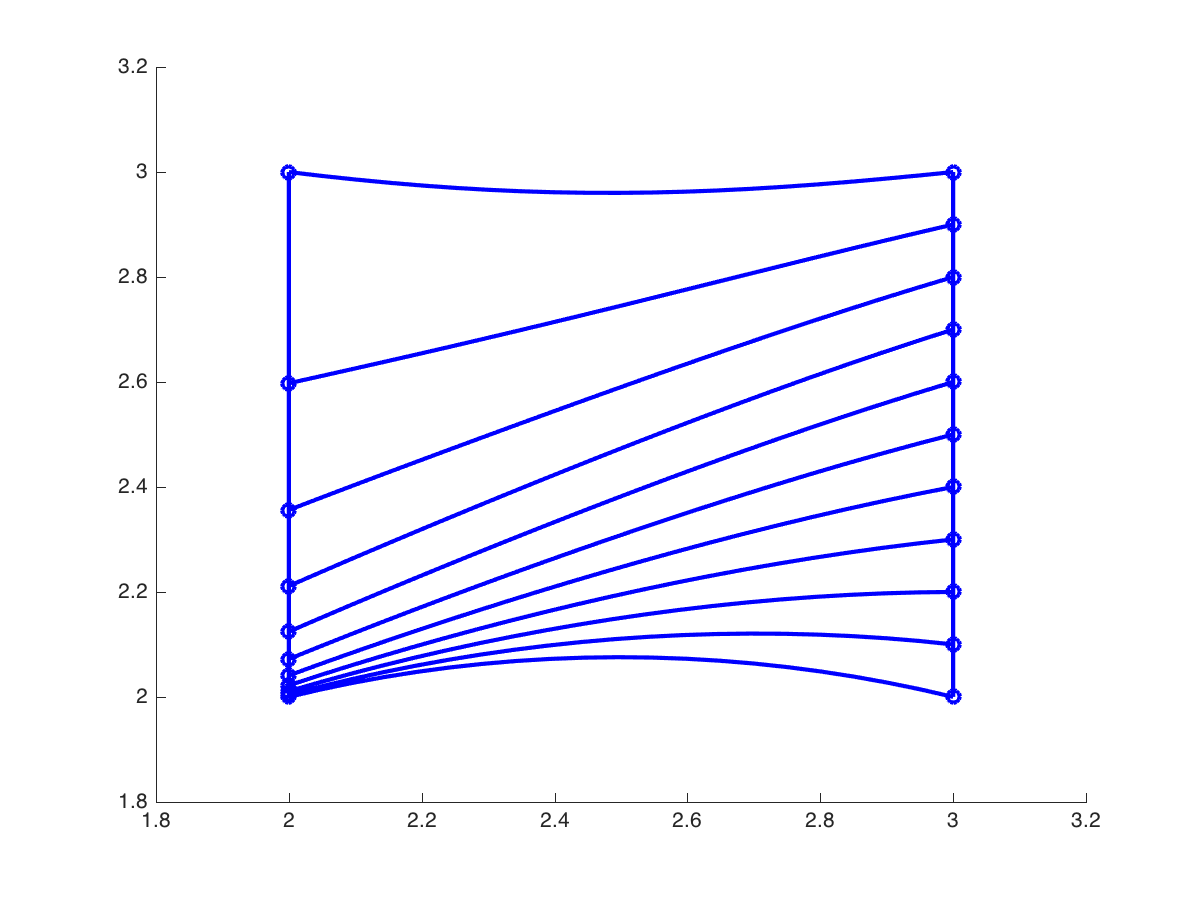}}\hspace*{-1.2em}
\subfloat{\includegraphics[width=0.18\textwidth]{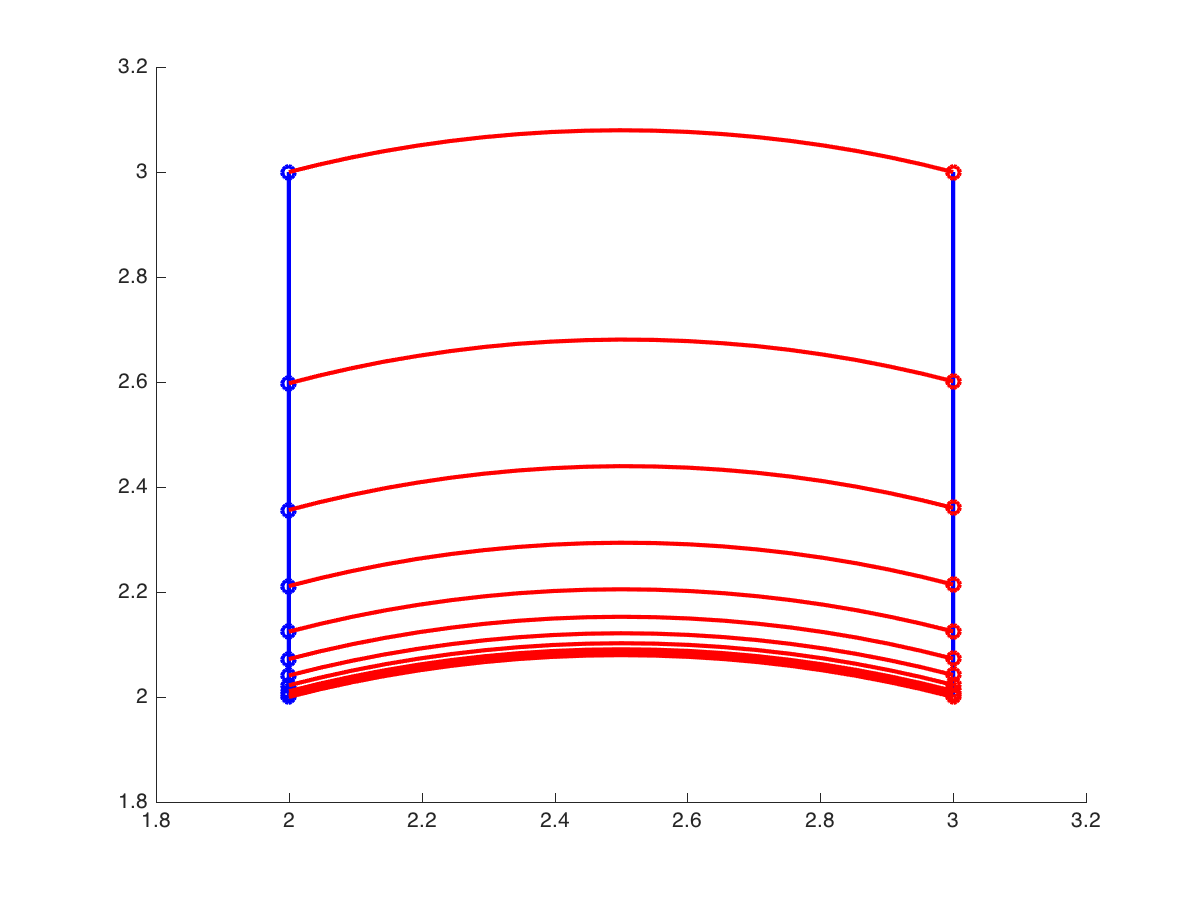}}
\subfloat{\includegraphics[width=0.18\textwidth]{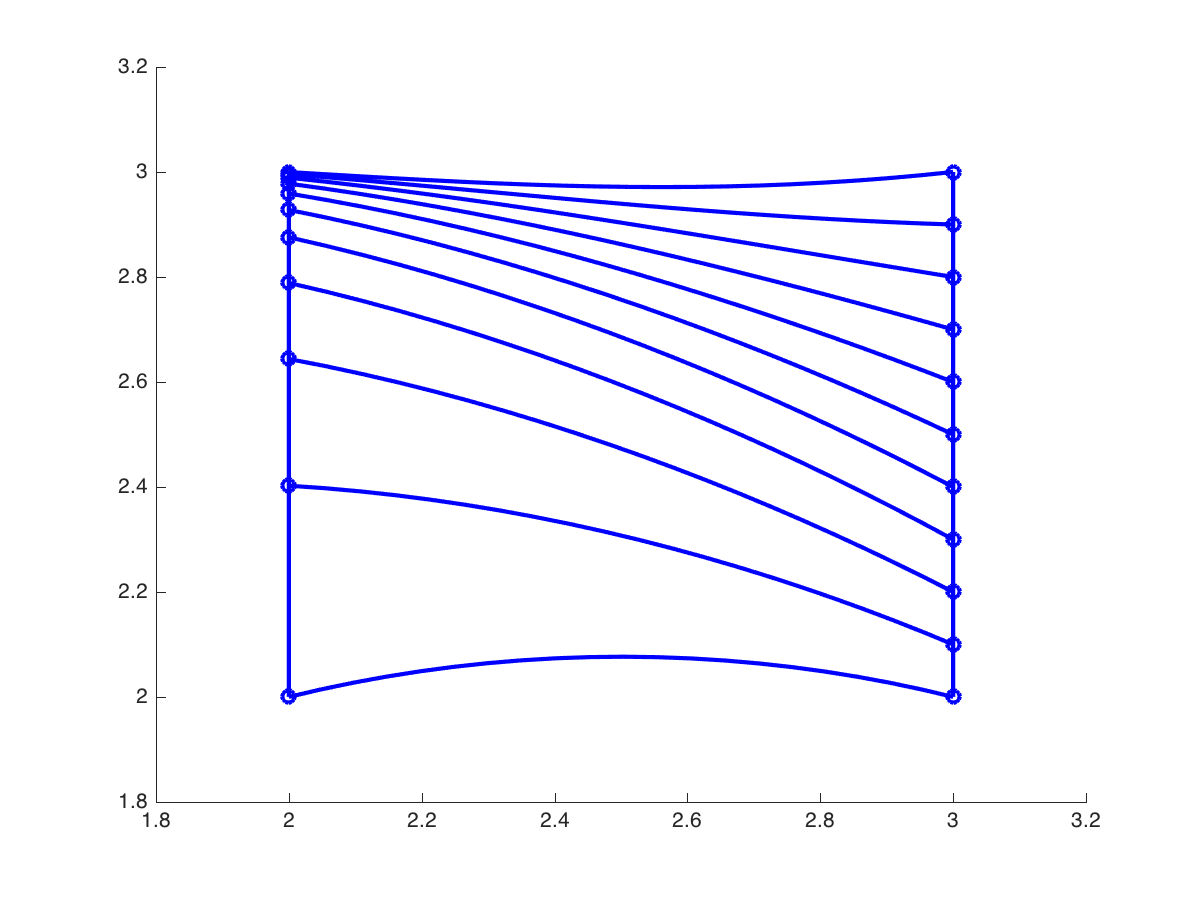}}\hspace*{-1.2em}
\subfloat{\includegraphics[width=0.18\textwidth]{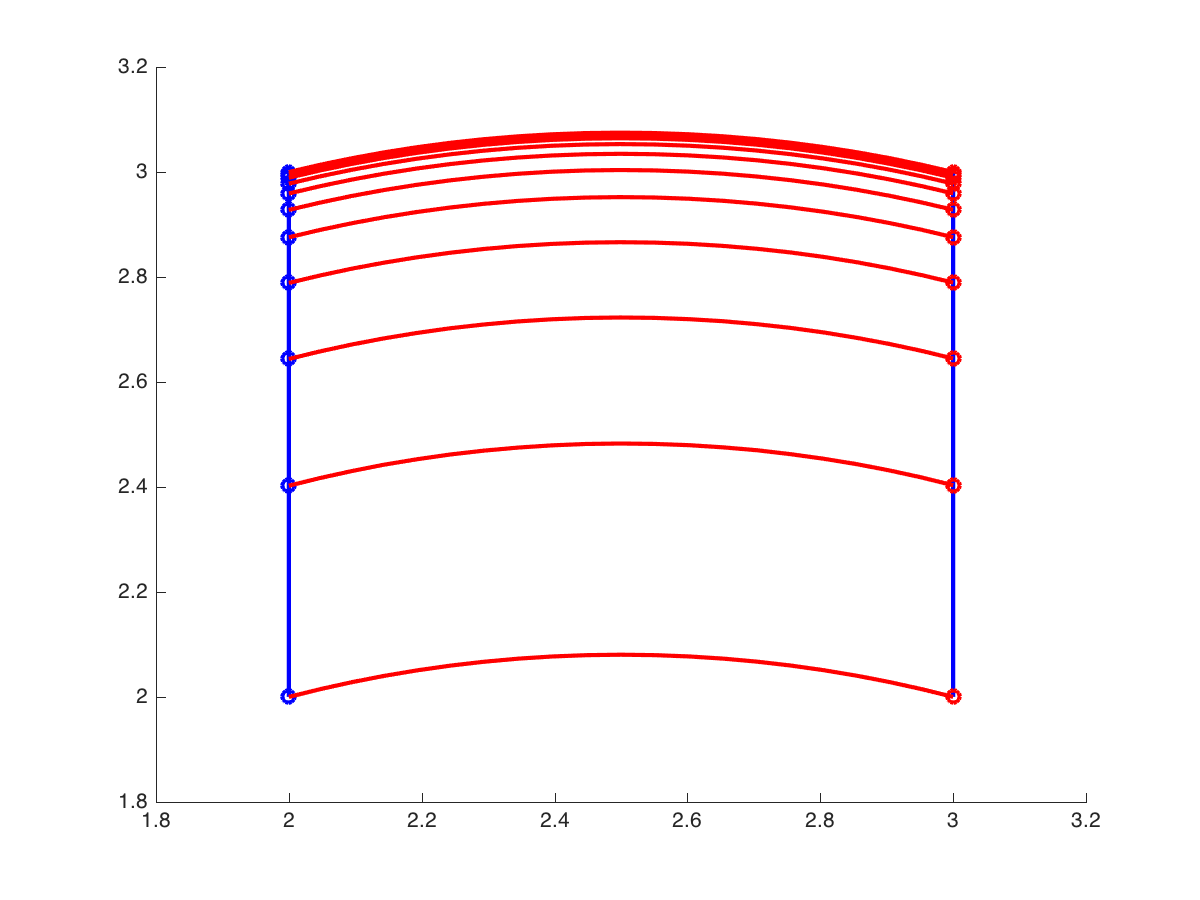}}
\subfloat{\includegraphics[width=0.18\textwidth]{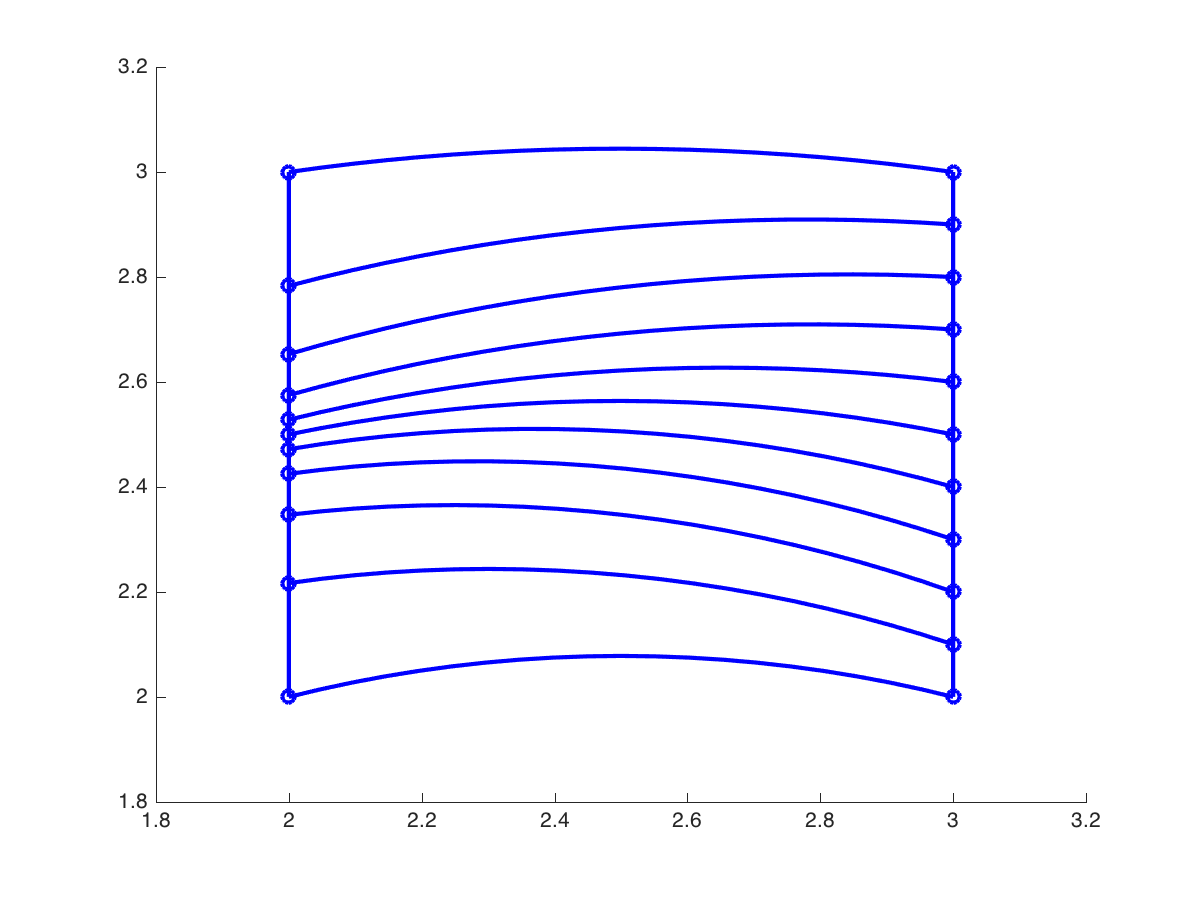}}\hspace*{-1.2em}
\subfloat{\includegraphics[width=0.18\textwidth]{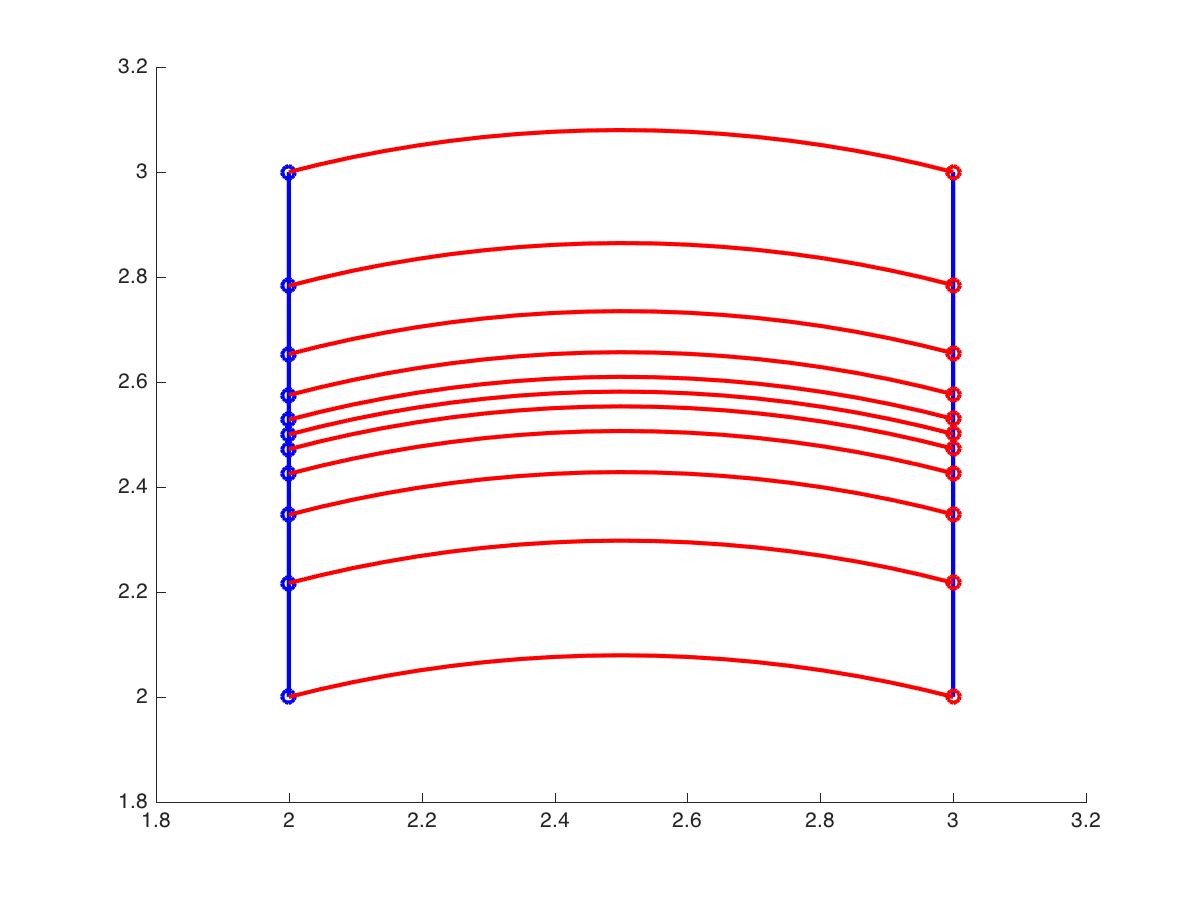}}\vspace*{-1em}\\
\subfloat{\includegraphics[width=0.18\textwidth]{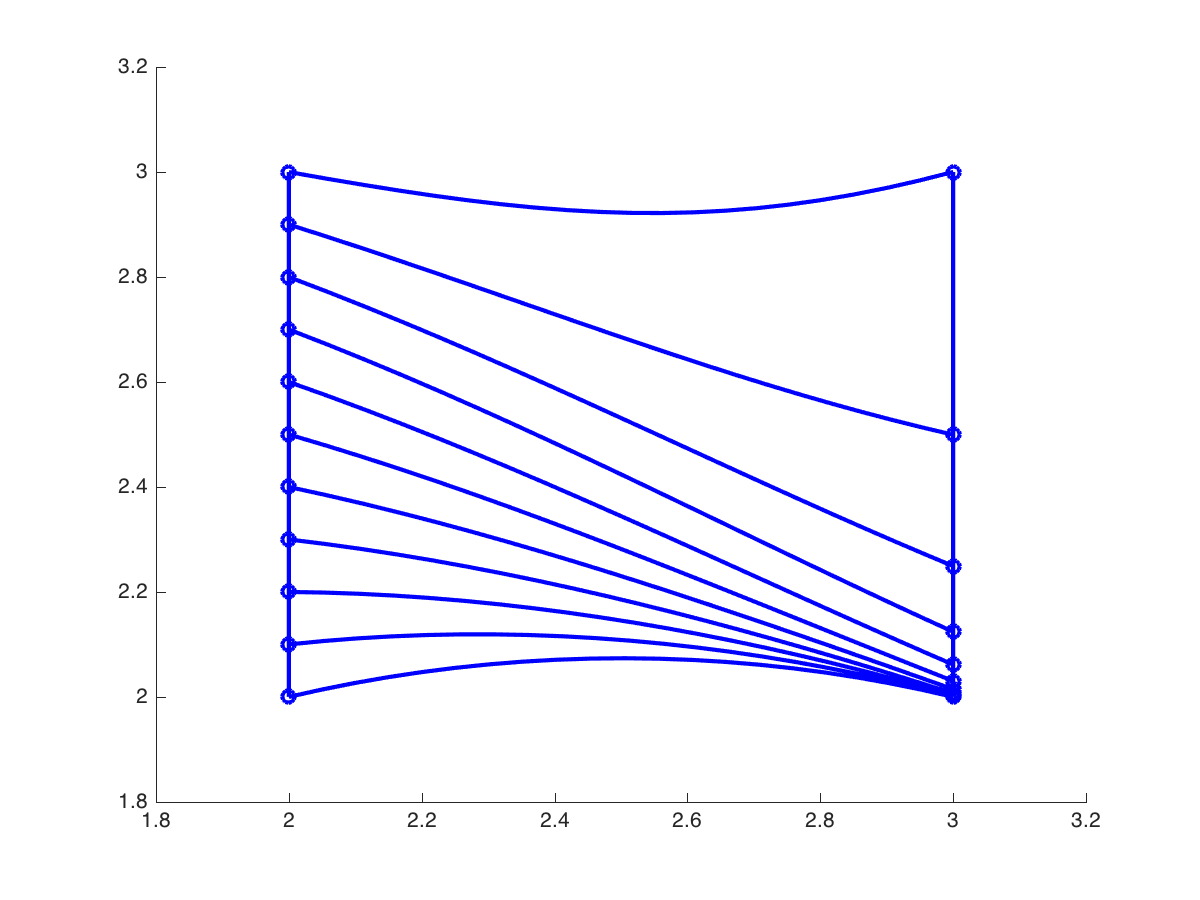}}\hspace*{-1.2em}
\subfloat{\includegraphics[width=0.18\textwidth]{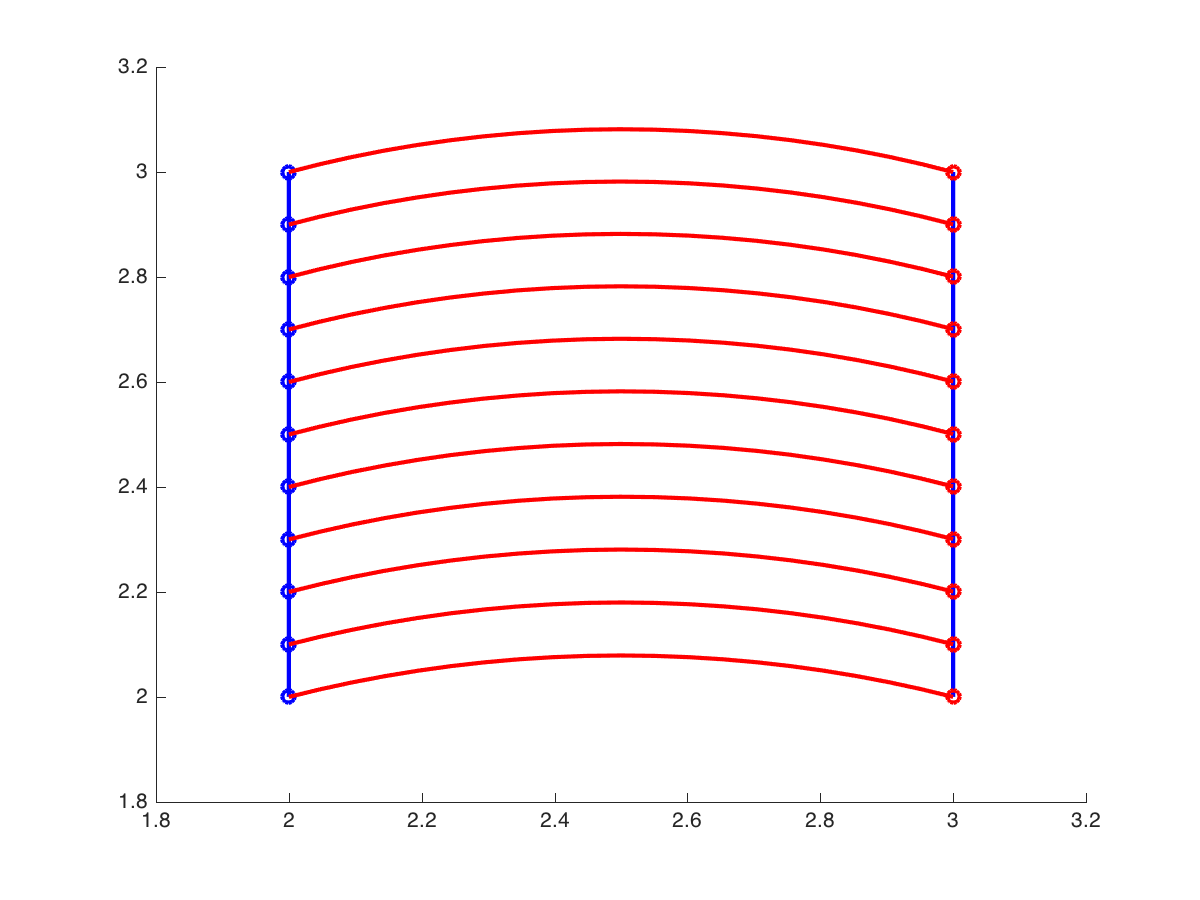}}
\subfloat{\includegraphics[width=0.18\textwidth]{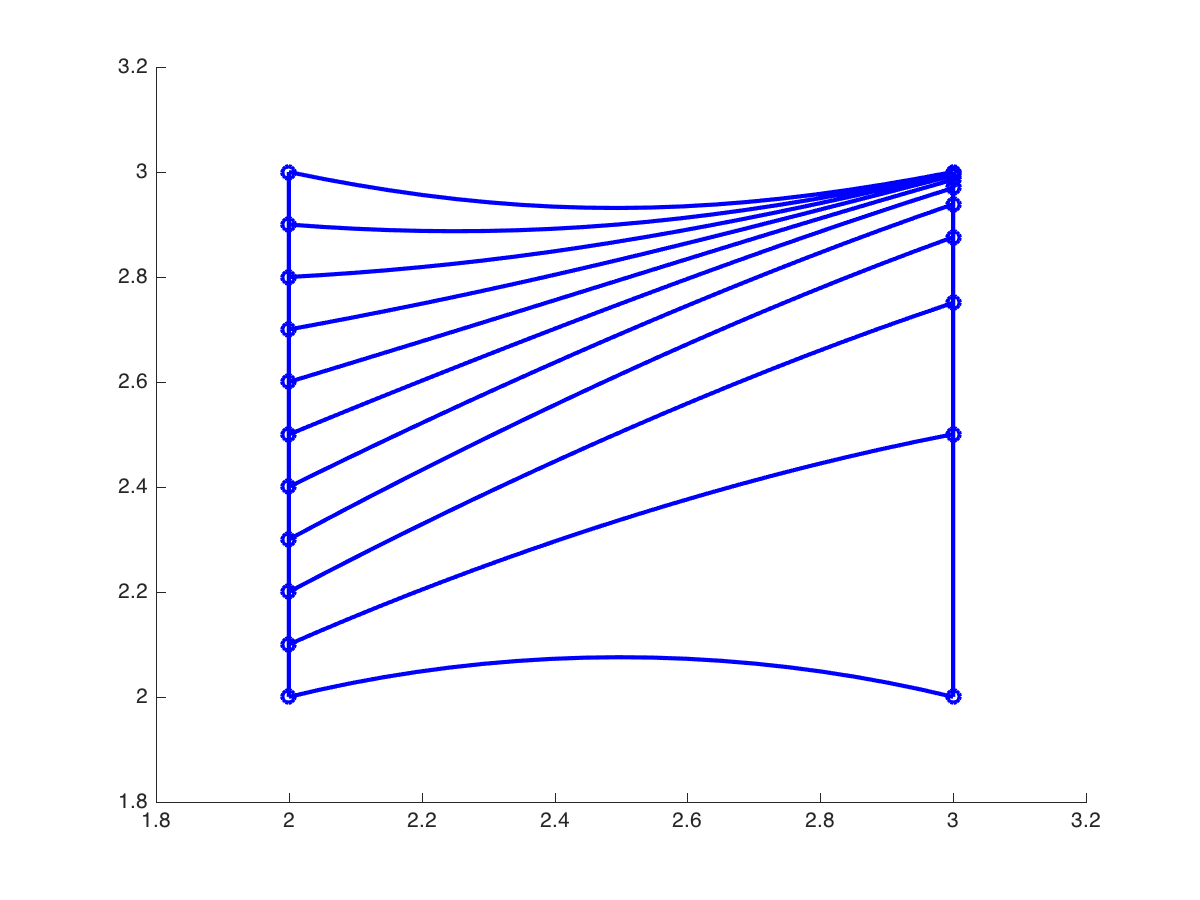}}\hspace*{-1.2em}
\subfloat{\includegraphics[width=0.18\textwidth]{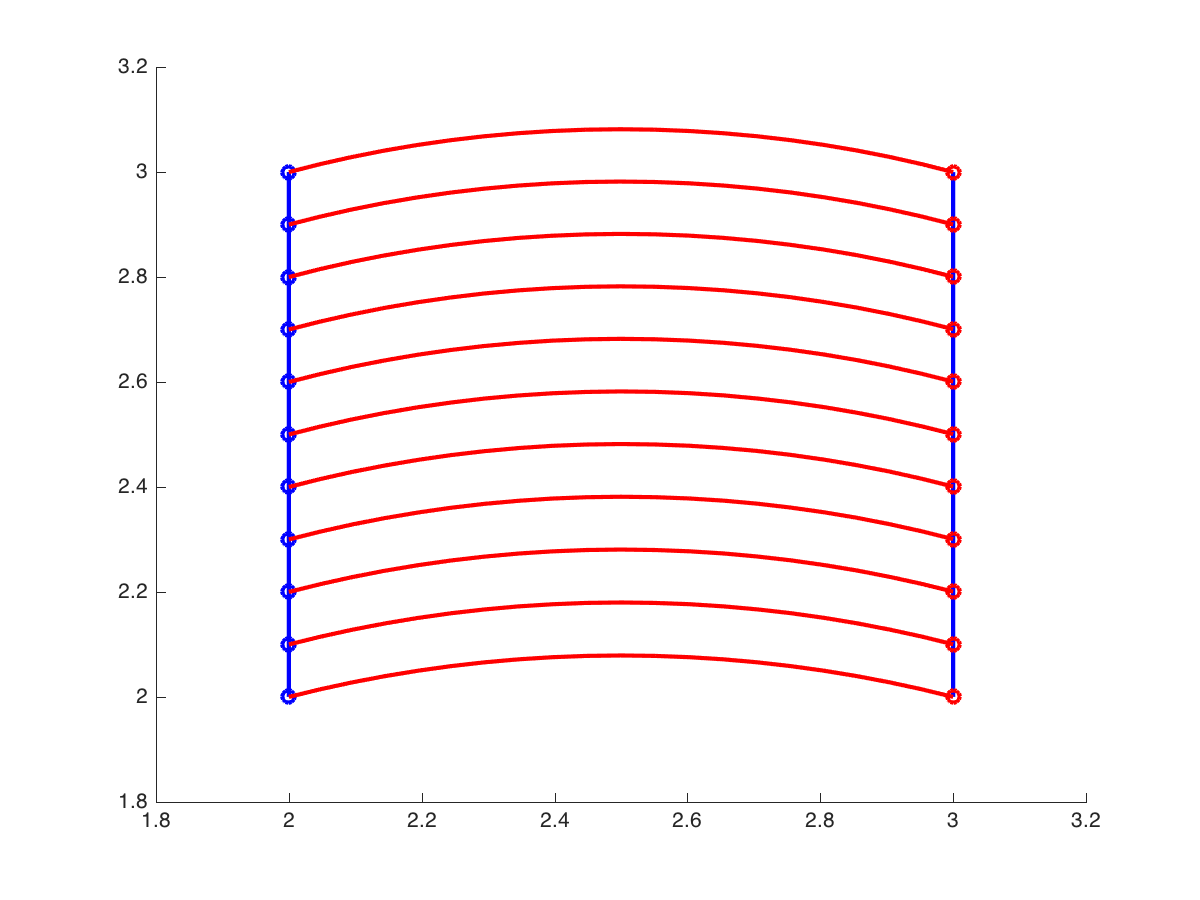}}
\subfloat{\includegraphics[width=0.18\textwidth]{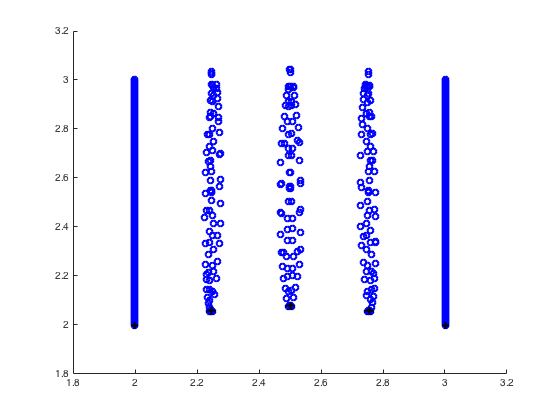}}\hspace*{-1.2em}
\subfloat{\includegraphics[width=0.18\textwidth]{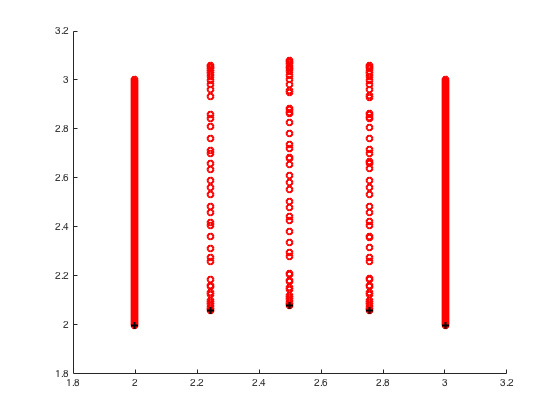}}
\caption{\small Geodesics between parameterized curves (blue) and corresponding horizontal geodesics (red) in the hyperbolic half-plane, and their superpositions.}
\label{fig:horgeodshootH2}
\end{figure}
\begin{figure}
\centering
\caption{\small Length of the geodesics of the hyperbolic half-plane shown in Figure \ref{fig:horgeodshootH2}.}
\label{fig:table}
\begin{tabular}{|c|c|c|c|c|c|}
\hline
\color{blue} 0.6287 & \color{red} 0.5611 & \color{blue} 0.6249 & \color{red} 0.5633 &  \color{blue} 0.5798 & \color{red} 0.5608 \\
\hline
\color{blue} 0.7161 & \color{red} 0.5601 & \color{blue} 0.7051 & \color{red} 0.5601 &\cellcolor{gray!25} & \cellcolor{gray!25} \\
\hline
\end{tabular}
\end{figure}
\begin{figure}
\centering
\subfloat{\includegraphics[width=0.18\textwidth]{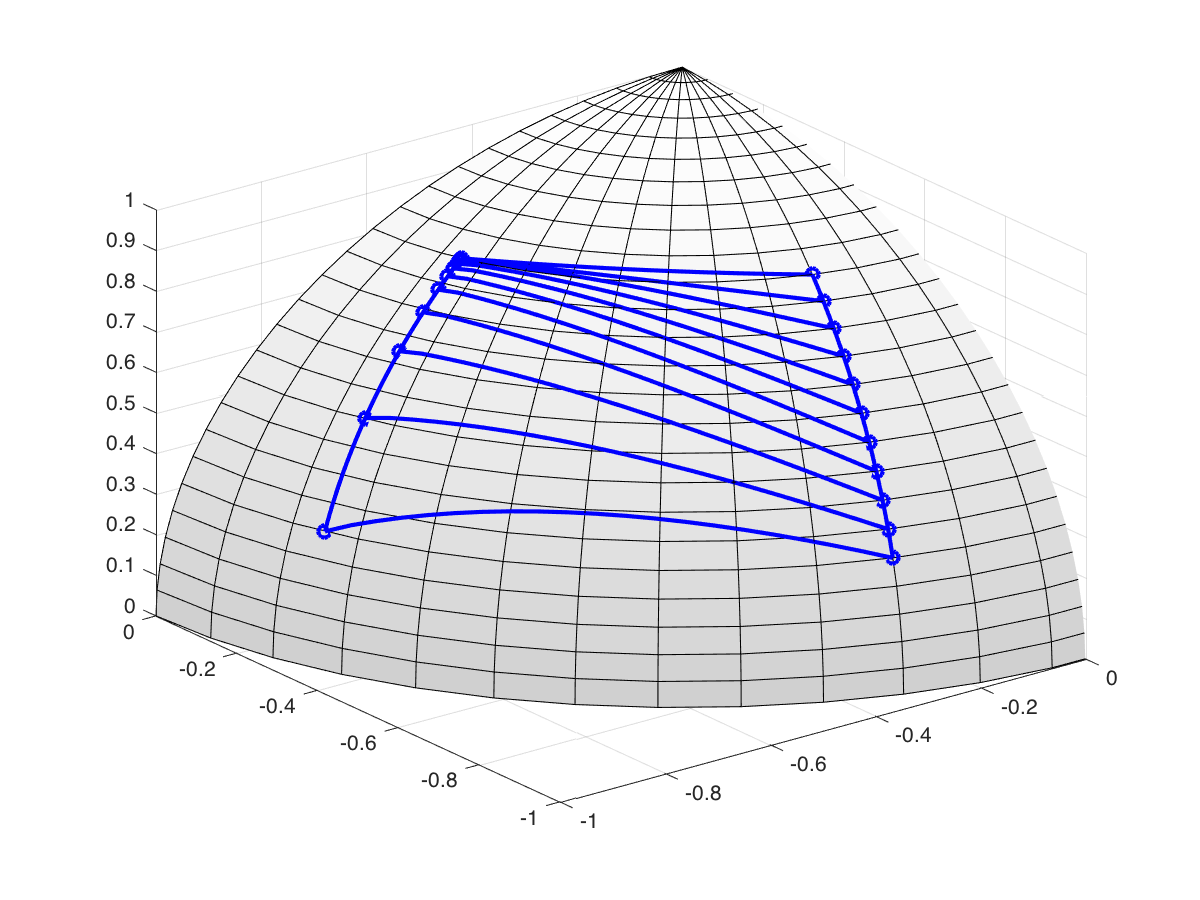}}\hspace*{-1em}
\subfloat{\includegraphics[width=0.18\textwidth]{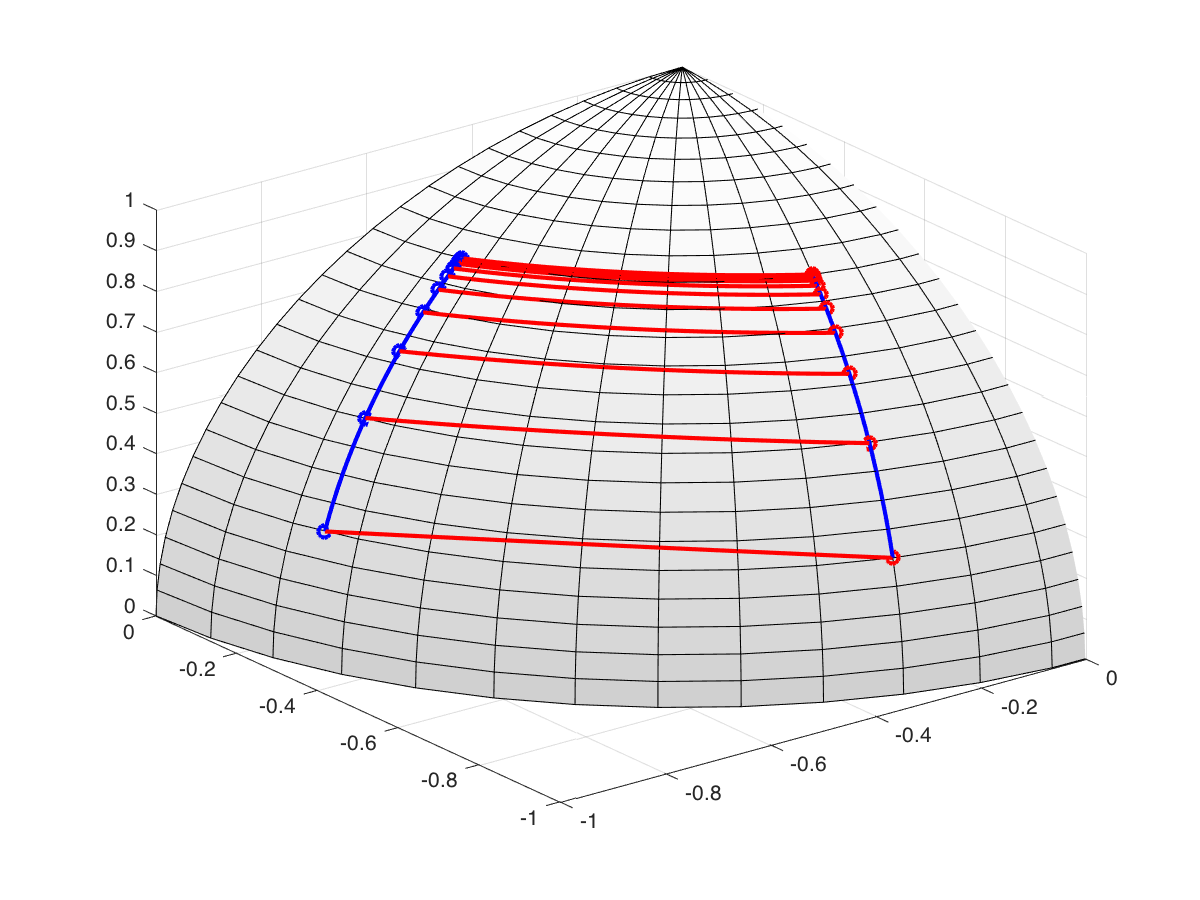}}\hspace*{-0.2em}
\subfloat{\includegraphics[width=0.18\textwidth]{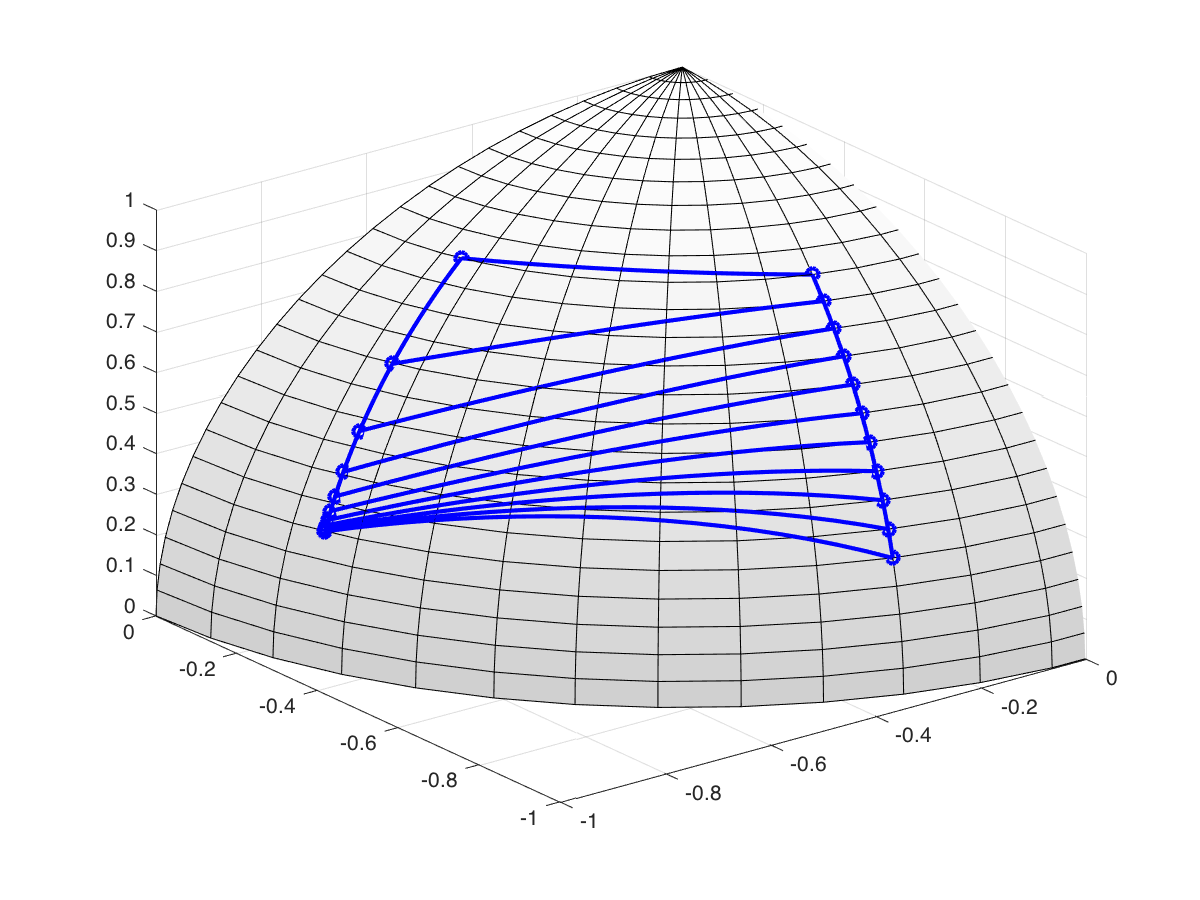}}\hspace*{-1em}
\subfloat{\includegraphics[width=0.18\textwidth]{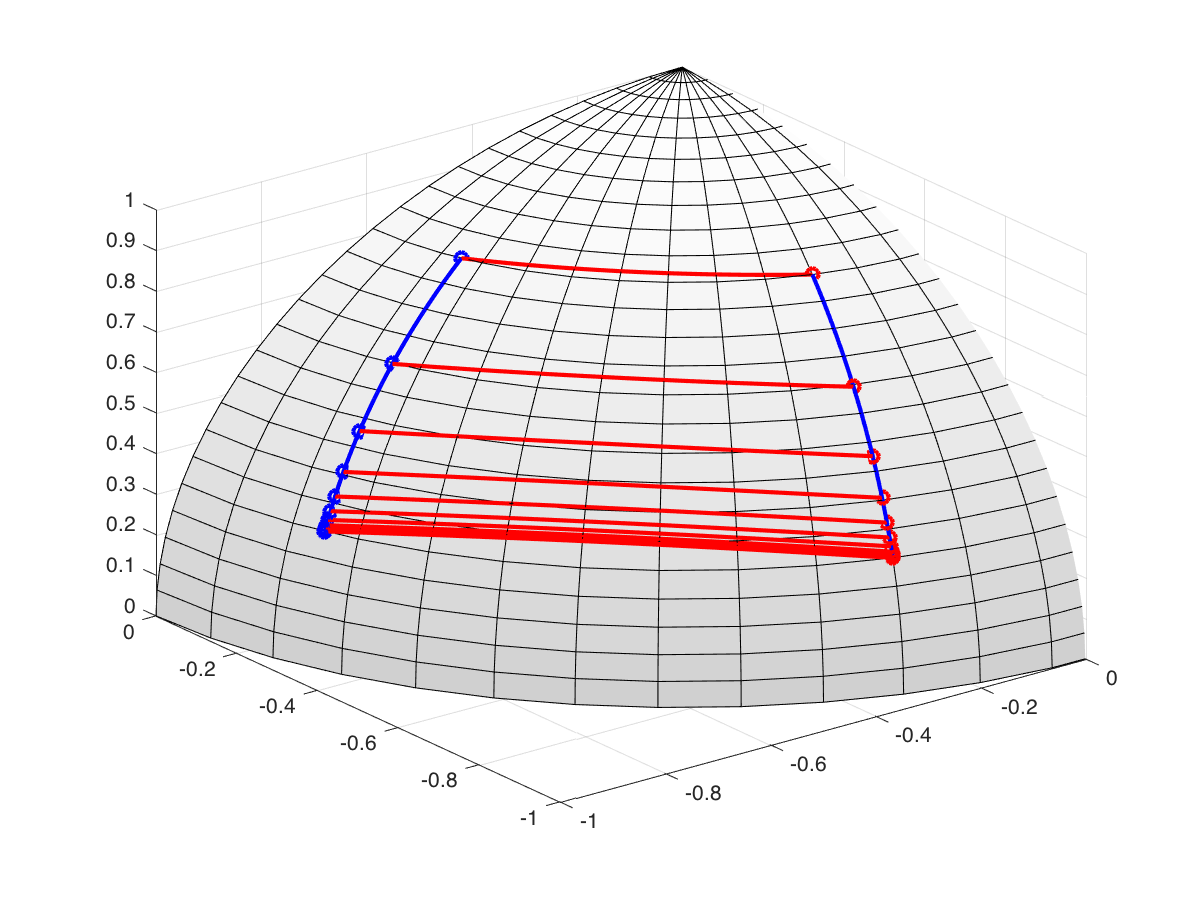}}\hspace*{-0.2em}
\subfloat{\includegraphics[width=0.18\textwidth]{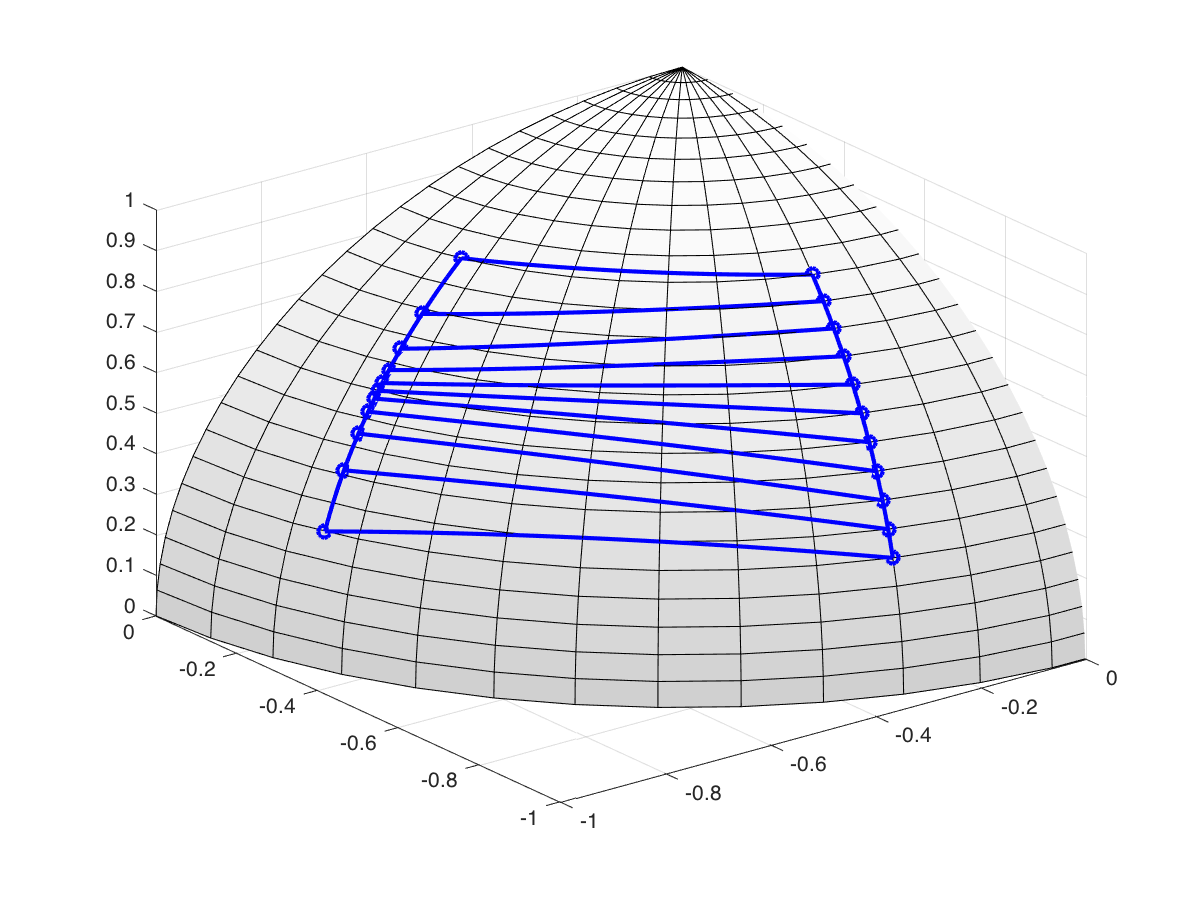}}\hspace*{-1em}
\subfloat{\includegraphics[width=0.18\textwidth]{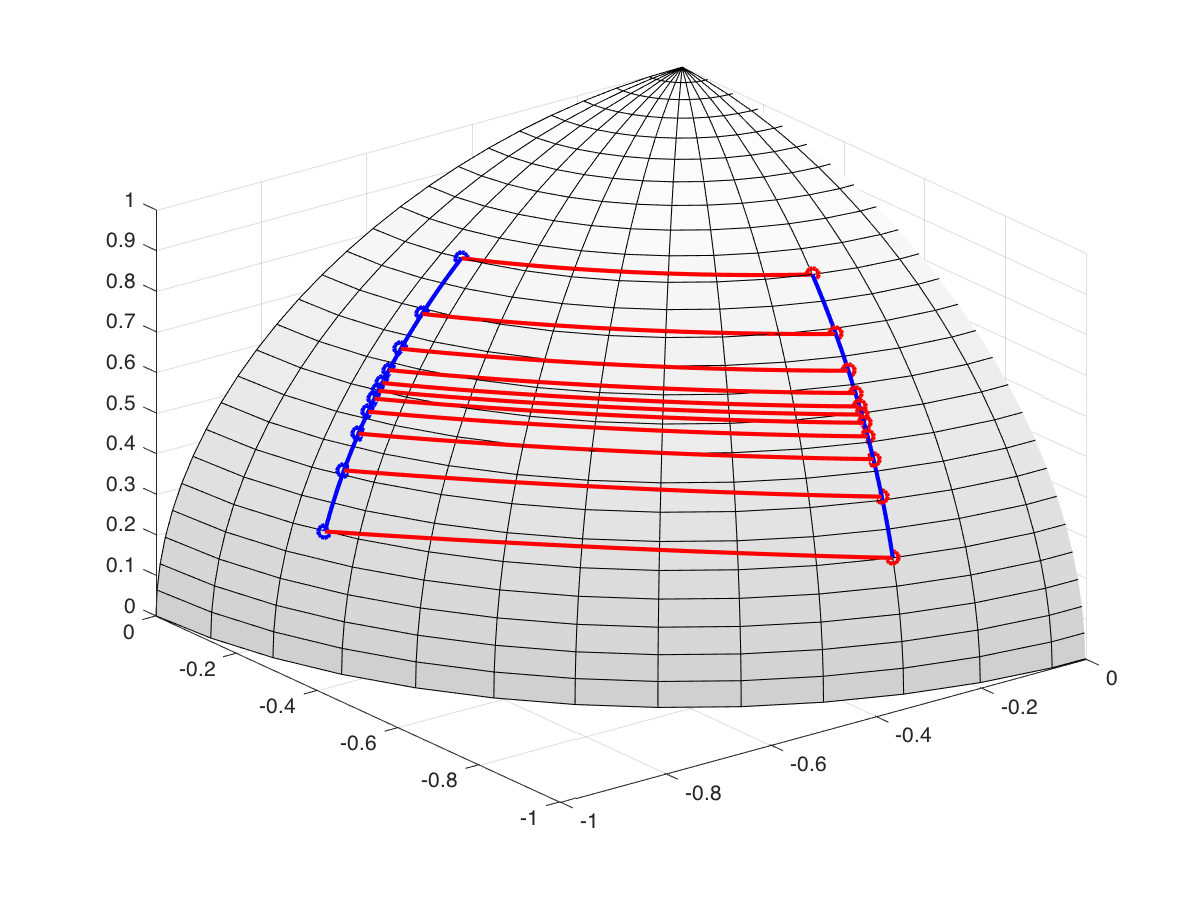}}\vspace*{-1.2em}\\
\subfloat{\includegraphics[width=0.18\textwidth]{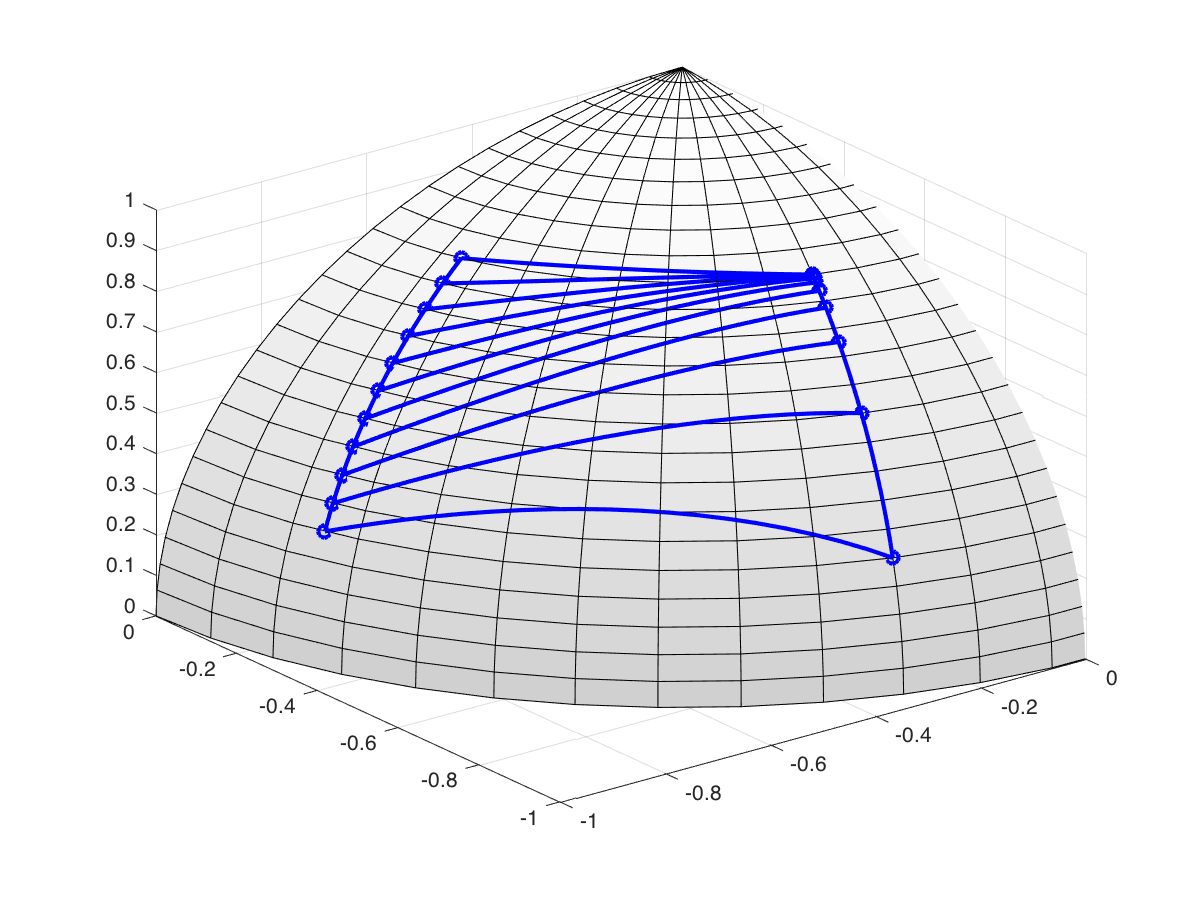}}\hspace*{-1em}
\subfloat{\includegraphics[width=0.18\textwidth]{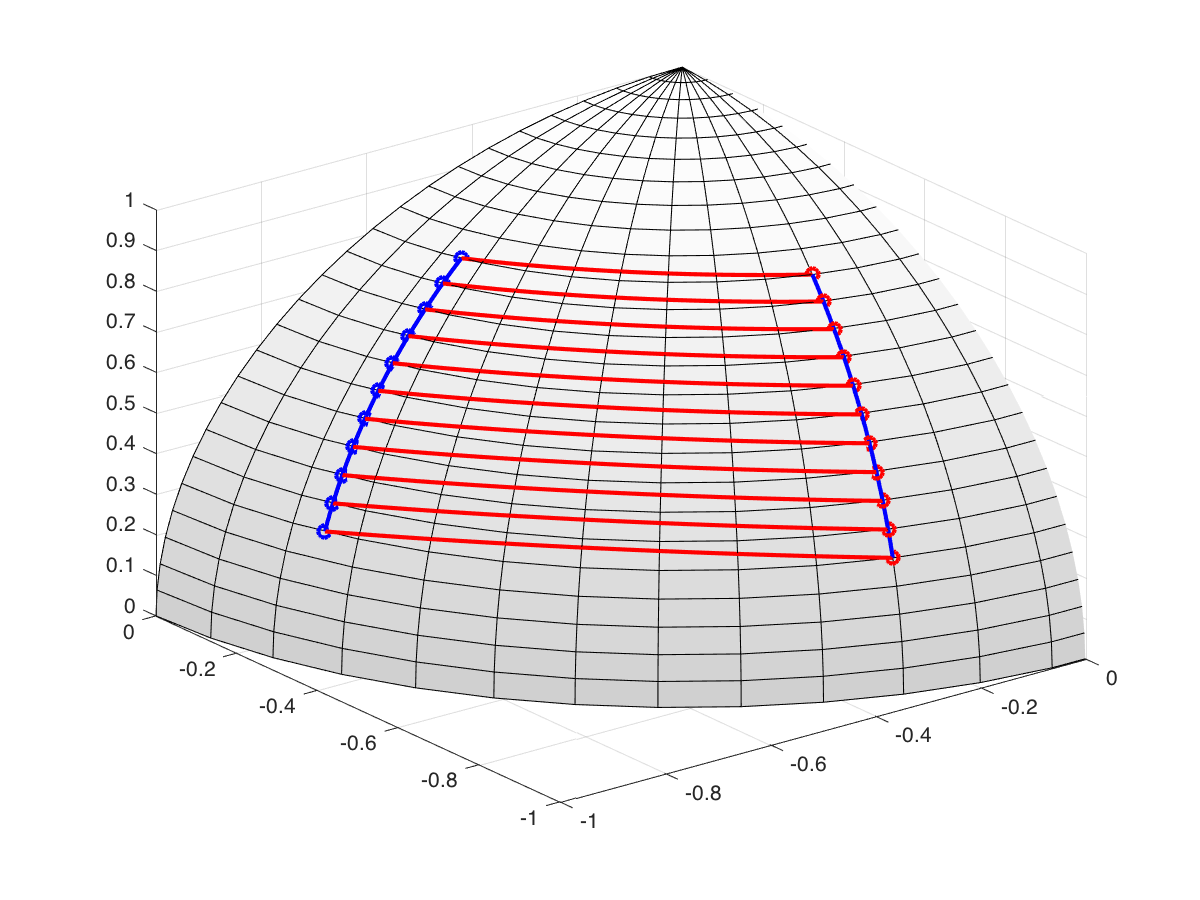}}\hspace*{-0.2em}
\subfloat{\includegraphics[width=0.18\textwidth]{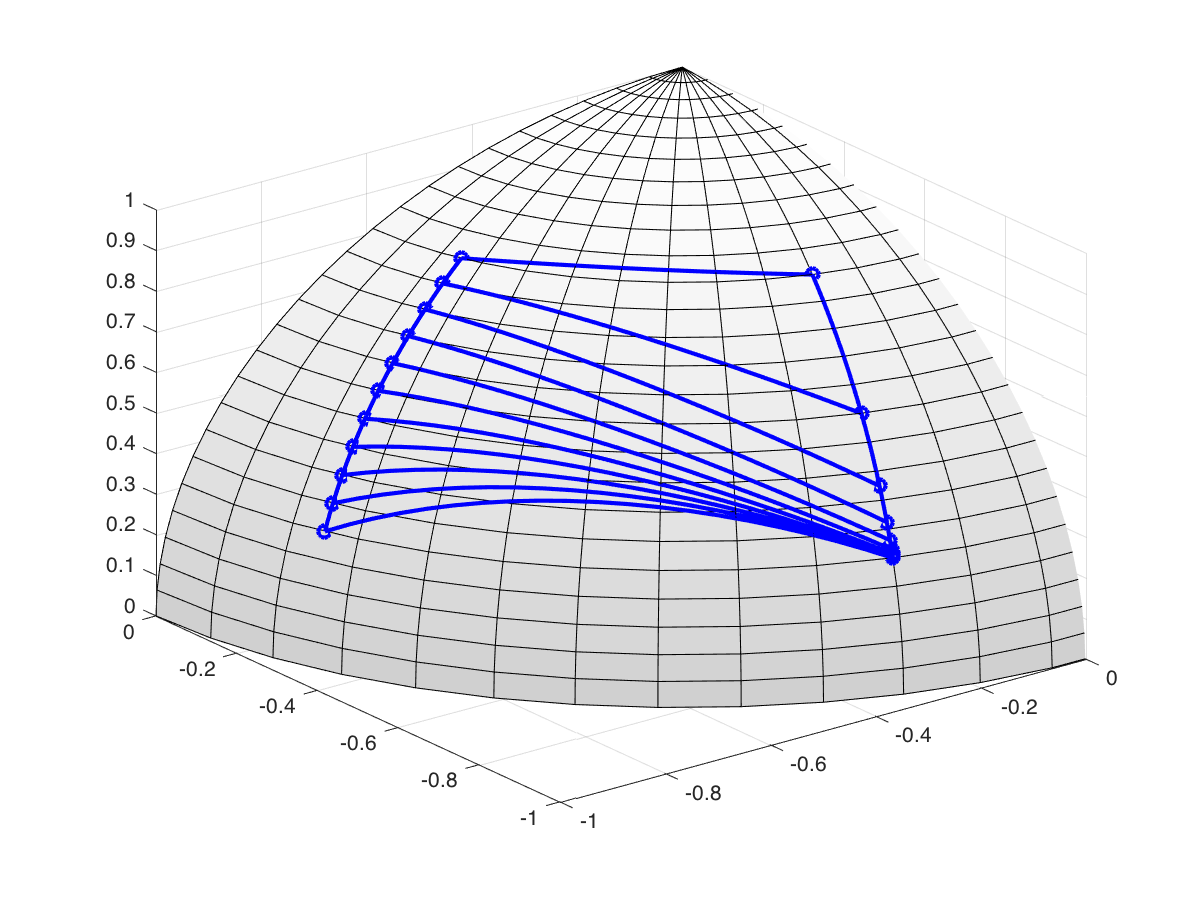}}\hspace*{-1em}
\subfloat{\includegraphics[width=0.18\textwidth]{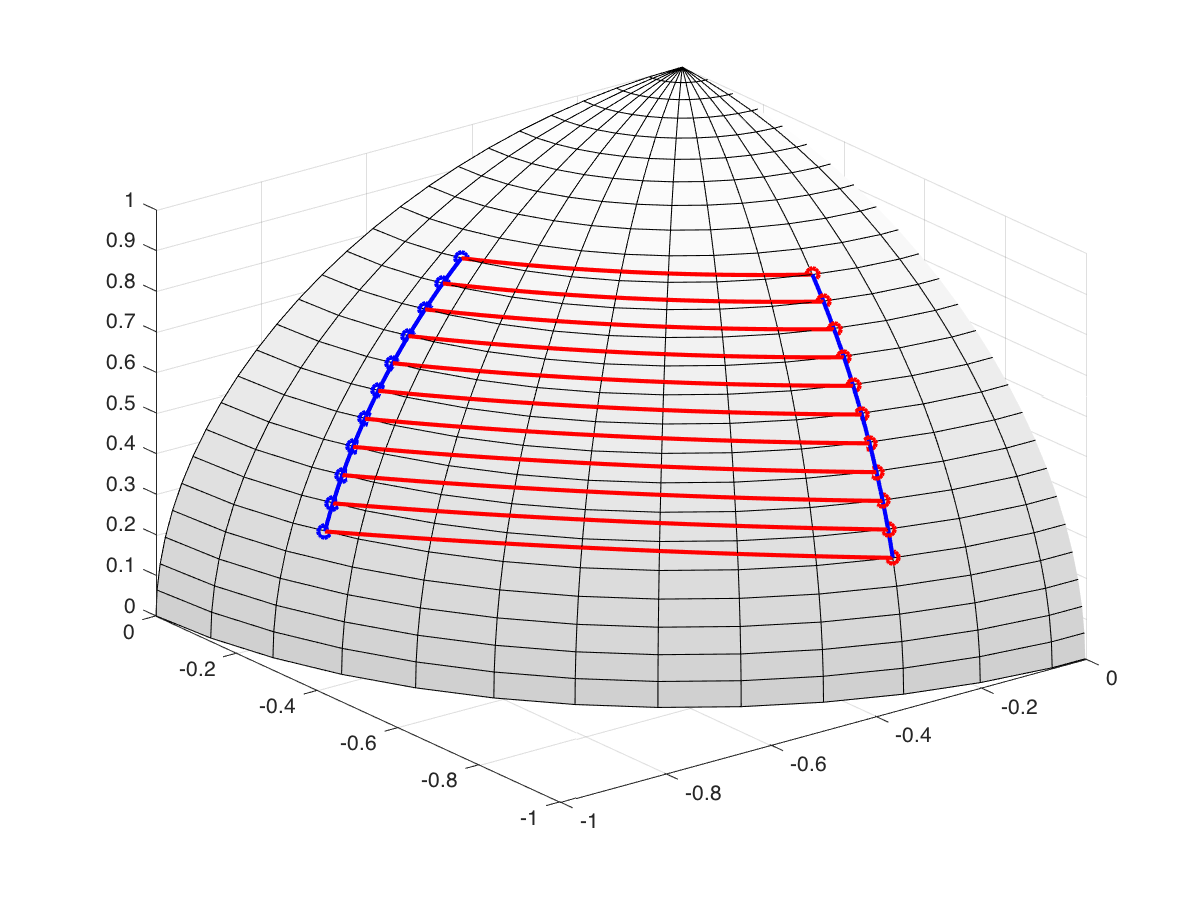}}\hspace*{-0.2em}
\subfloat{\includegraphics[width=0.18\textwidth]{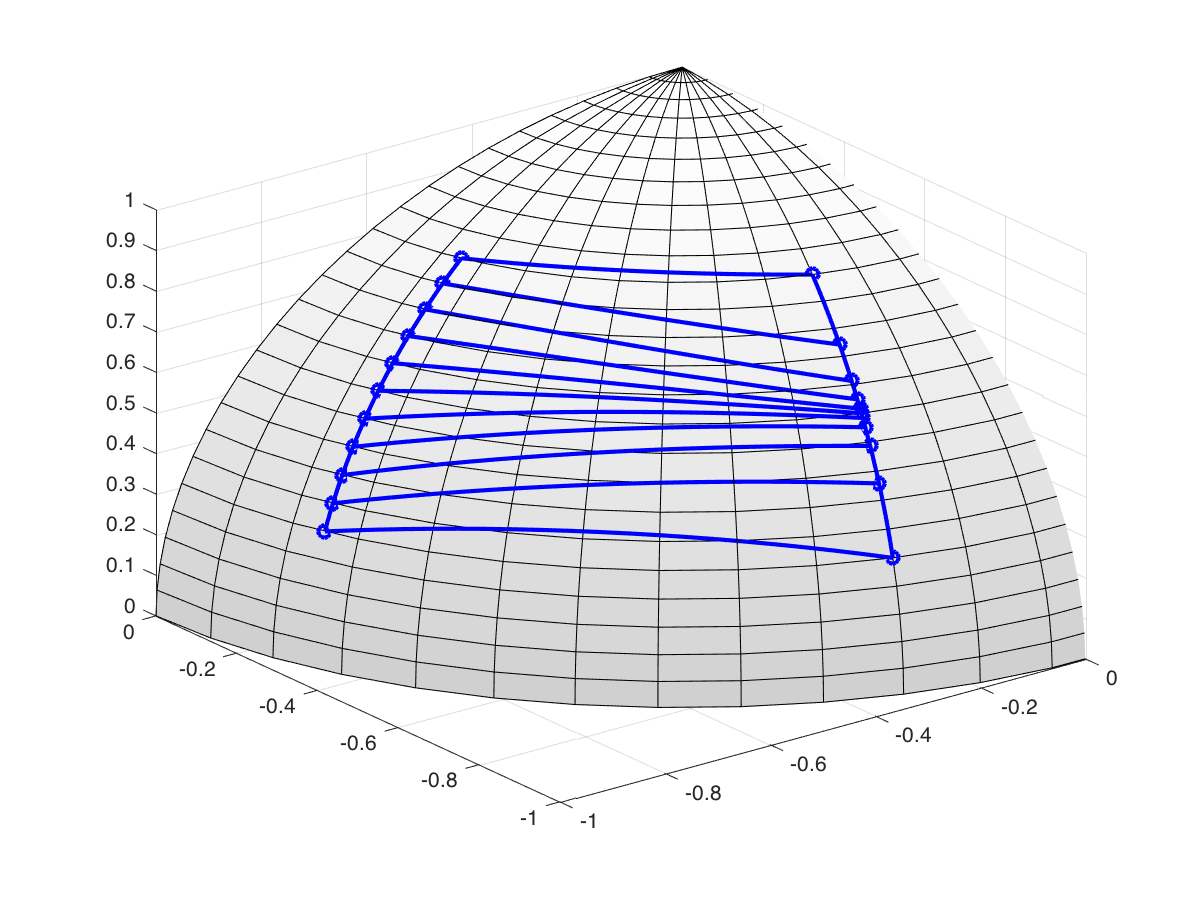}}\hspace*{-1em}
\subfloat{\includegraphics[width=0.18\textwidth]{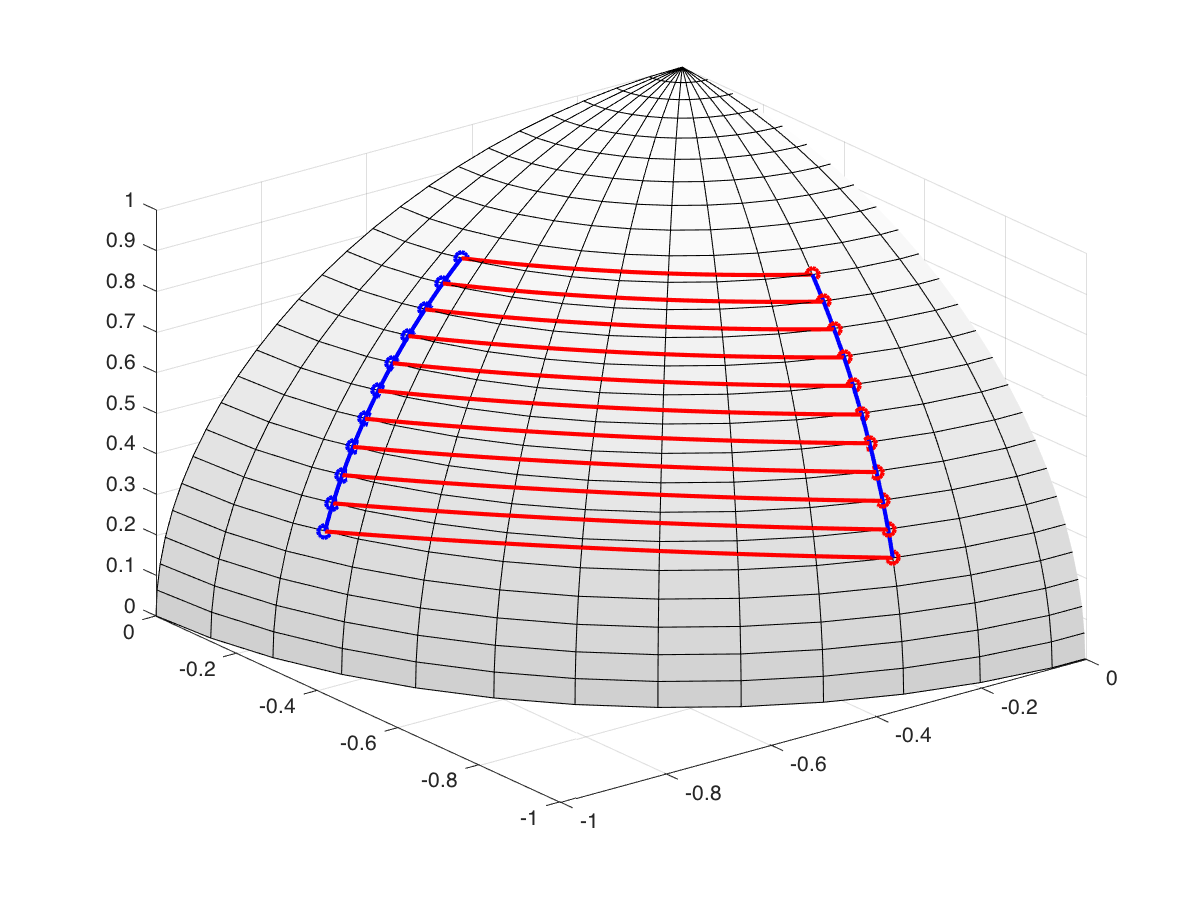}}
\caption{\small Initial and horizontal geodesics between spherical parameterized curves.}
\label{fig:horgeodshootS2}
\end{figure}
\begin{figure}
\centering
\subfloat{\includegraphics[width=0.17\textwidth]{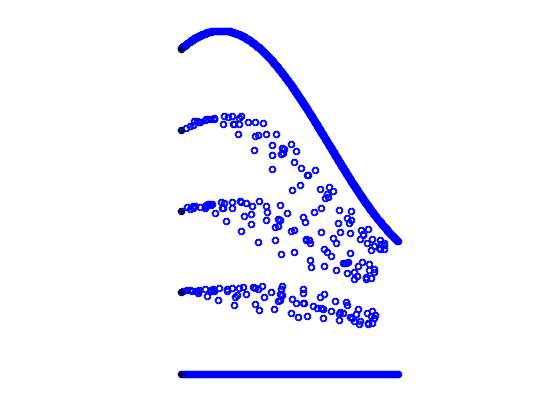}}\hspace*{-2em}
\subfloat{\includegraphics[width=0.17\textwidth]{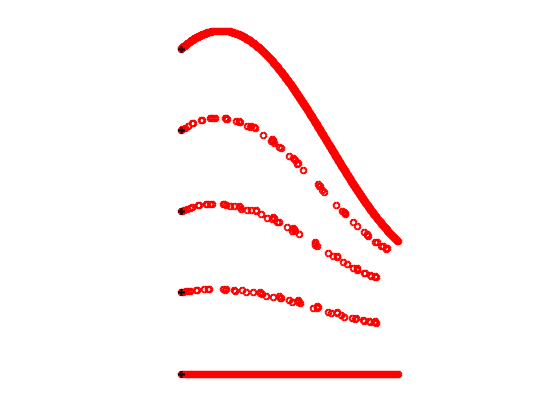}}
\subfloat{\includegraphics[width=0.17\textwidth]{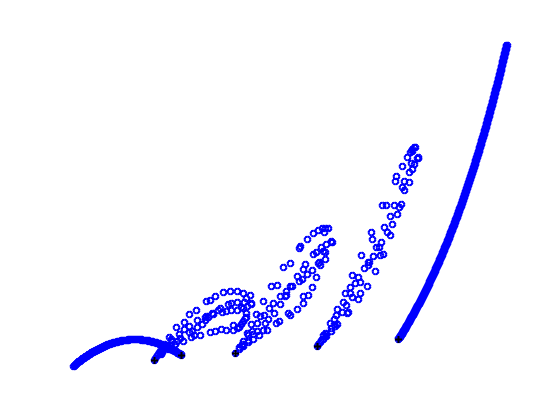}}\hspace*{-1em}
\subfloat{\includegraphics[width=0.17\textwidth]{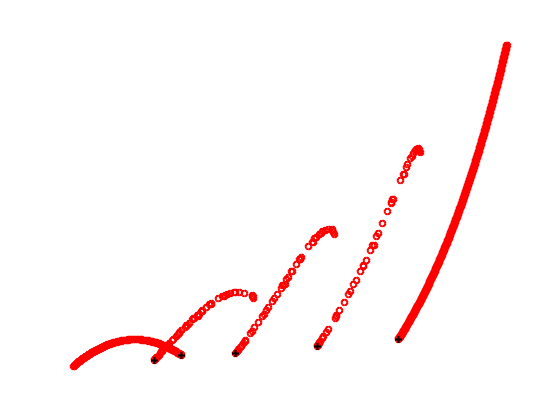}} \hspace{1em}
\subfloat{\includegraphics[width=0.17\textwidth]{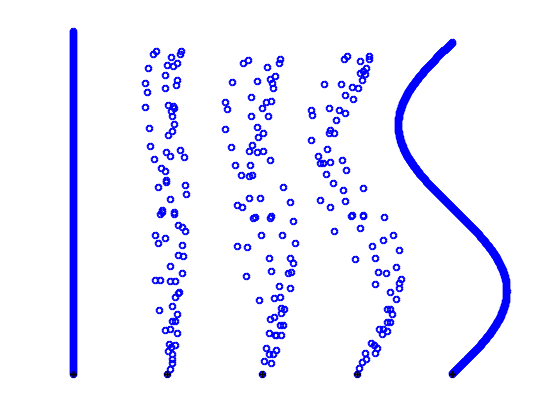}}
\subfloat{\includegraphics[width=0.17\textwidth]{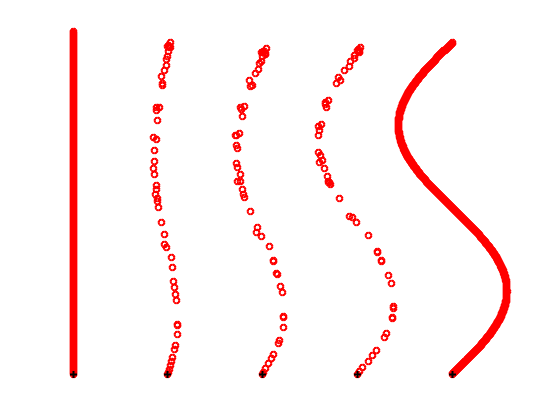}}
\caption{\small Superposition of the initial (blue) and horizontal (red) geodesics obtained for different sets of parameterizations of three pairs of plane curves.}
\label{fig:superpositionR2}
\end{figure}



\begin{thebibliography}{99}
	

	
%

\bibitem{bauer12}
	\newblock M. Bauer, P. Harms and P. W. Michor,
	\newblock Almost local metrics on shape space of hypersurfaces in n-space,
 	\newblock \emph{SIAM J. Imaging Sci.}, 5(1) (2012), 244--310.
	
\bibitem{bauer17}
	\newblock M. Bauer, M. Bruveris, P. Harms and J. M{\o}ller-Andersen,
	\newblock A numerical framework for sobolev metrics on the space of curves,
	\newblock \emph{SIAM J. Imaging Sci.}, 10 (2017), 47--73.
%
%

%
%
\bibitem{moi17}
	\newblock A. Le Brigant,
	\newblock Computing distances and geodesics between manifold-valued curves in the SRV framework,
	\newblock \emph{J. Geom. Mech.}, 9, 2 (2017), 131 -- 156.
	
%
%

%
%
%
\bibitem{mich4}
	\newblock P. W. Michor,
	\newblock Topics in Differential geometry,
	\newblock in volume 93 of \emph{Graduate Studies in Mathematics}, American Mathematical Society, Providence, RI (2008).

\bibitem{mio}
	\newblock W. Mio, A. Srivastava and S. H. Joshi,
	\newblock On shape of plane elastic curves,
	\newblock \emph{International Journal of Computer Vision}, 73 (2007), 307 -- 324.
	
\bibitem{sri}
	\newblock A. Srivastava, E. Klassen, S. H. Joshi, and I. H. Jermyn,
	\newblock Shape analysis of elastic curves in Euclidean spaces,
	\newblock \emph{IEEE PAMI}, 33, 7 (2011), 1415 -- 1428.

\bibitem{tum}
	\newblock A. B. Tumpach and S. C. Preston,
	\newblock Quotient elastic metrics on the manifold of arc-length parameterized plane curves, 
	\newblock \emph{J. Geom. Mech.}, 9, 2 (2017), 227 -- 256.	
%
%

		
\bibitem{zhang16}
	\newblock Z. Zhang, E. Klassen and A. Srivastava,
	\newblock Phase-amplitude separation and modeling of spherical trajectories
	\newblock (2016), arXiv:1603.07066.

%
\end{thebibliography}
\end{document}